\newtheorem{theorem}{Theorem}[section]
\newtheorem{lemma}[theorem]{Lemma}
\newtheorem{proposition}[theorem]{Proposition}
\theoremstyle{definition}
\theoremstyle{remark}
\newtheorem*{remark}{Remark}
\def\paragraph#1{\noindent \textbf{#1}}
\numberwithin{equation}{section}
\def\laweq{\overset{\mathrm{law}}=}
\def\supp{\mathop{\mathrm{supp}}\nolimits}
\def\supp{\mathop{\rm supp}\nolimits}
\def\d{\mathrm{d}}
\def\<{\langle}
\def\>{\rangle}
\def\e{\epsilon}
\def\g{\gamma}
\def\k{\kappa}
\def\l{\lambda}
\def\s{\sigma}
\def\o{\omega}
\def\D{\Delta}
\def\L{\Lambda}
\def\O{\Omega}
\def\S{\Sigma}
\def\del{\partial}
\def\R{{\Bbb R}}  
\def\N{{\Bbb N}}  
\def\P{{\Bbb P}}
\def\E{{\Bbb E}}
\let\cal=\mathcal
\def\BB{{\cal B}}
\def\EE{{\cal E}}
\def\FF{{\cal F}}
\def\GG{{\cal G}}
\def\LL{{\cal L}}
\def\NN{{\cal N}}
\def\TT{{\cal T}}
 \def \L {{\Lambda}}
 \def \k {{\kappa}}
 \def \s {{\sigma}}
 \def \D {{\Delta}}
 \def \g {{\gamma}}
 \def \l {{\lambda}}
 \def \d {{\delta}}
 \def \o {{\omega}}
 \def \O {{\Omega}}
 \def \del {{\partial}}
 \def \ba {\begin{array}}
 \def \ea {\end{array}}
 \def \eee{\hbox{\rm e}}
 \newcommand{\be}{\begin{equation}}
 \newcommand{\ee}{\end{equation}}
\newcommand{\bea}{\begin{eqnarray}}
 \newcommand{\eea}{\end{eqnarray}}
\def\TH(#1){\label{#1}}\def\thv(#1){\ref{#1}}
\def\Eq(#1){\label{#1}}\def\eqv(#1){(\ref{#1})}
\def\sfrac#1#2{{\textstyle{#1\over #2}}}
 \def \1{\mathbbm{1}}
\def\wt {\widetilde}
\def\wh{\widehat}
\def\ov {\overline }
\begin{document}


 \title[Variable speed branching Brownian motion]
{
Variable speed branching Brownian motion 1.
\\  Extremal processes in the weak correlation regime}
\author[A. Bovier]{Anton Bovier}
 \address{A. Bovier\\Institut f\"ur Angewandte Mathematik\\
Rheinische Friedrich-Wilhelms-Universit\"at\\ Endenicher Allee 60\\ 53115 Bonn, Germany }
\email{bovier@uni-bonn.de, \url{wt.iam.uni-bonn.de/bovier}}

\author[L. Hartung]{Lisa Hartung}
 \address{L. Hartung\\Institut f\"ur Angewandte Mathematik\\
Rheinische Friedrich-Wilhelms-Universit\"at\\ Endenicher Allee 60\\ 53115 Bonn, Germany}
\email{lhartung@uni-bonn.de, \url{wt.iam.uni-bonn.de/hartung}}

\subjclass[2000]{60J80, 60G70, 82B44} \keywords{Gaussian processes, branching Brownian motion, 
variable speed, extremal processes,  Gaussian comparison, cluster processes} 

\date{\today}

 \begin{abstract}  
 We prove the convergence of the extremal processes for variable speed 
 branching Brownian motions where the  "speed functions", that describe the 
 time-inhomogeneous variance,  lie strictly below their 
 concave hull and satisfy a certain  weak regularity condition. These limiting 
 objects are universal in the sense that they only depend on the slope of 
 the speed function at $0$ and the final time  $t$. The proof is based on  
 previous results for two-speed BBM obtained in \cite{BovHar13}  and uses 
 Gaussian comparison arguments to extend these to the general case.

 \end{abstract}

\thanks{A.B. is partially supported through the German Research Foundation in 
the Collaborative Research Center 1060 "The Mathematics of Emergent Effects", 
the Priority Programme  1590 ``Probabilistic Structures in Evolution'',
the Hausdorff Center for Mathematics (HCM), and  the Cluster of Excellence ``ImmunoSensation'' at Bonn University.
L.H. is supported by the German Research Foundation in the Bonn International Graduate School in Mathematics (BIGS). 
}

 \maketitle


\section{Introduction}

Gaussian processes indexed by trees is a topic that received a lot of attention,  in particular in 
the context of spin glass theory (see e.g. \cite{BovStatMech, Tala1,Tala2, Pan-book})
through the so-called Generalised Random Energy Models (GREM), introduced and 
studied by
Derrida and Gardner \cite{D85,GD86a,GD86b}. Other contexts where such processes 
appeared are branching random walks 
(see e.g.\cite{Bramson-rw,Shi,ZeitouniLN}) and branching Brownian motion  
(see e.g. \cite{Moyal,McKean,B_M,B_C, DerriSpohn88}).

One of the issues of interest in this context is to
understand the structure of the extremal processes that arise in these models 
in the limit when the size of the tree tends to infinity. A Gaussian process on a tree is 
characterised fully by the tree and by its covariance, which in the models we are interested in is 
a function of the genealogical distance on the tree.
In the classical models of branching random walk and branching Brownian motion, 
the covariance is  a linear function of
the tree-distance. 
 In the context of the GREM, the tree is  
a binary tree with $N$ levels; another popular tree is a supercritical 
Galton-Watson tree (see, e.g. \cite{athreya-ney}). These 
models generalise branching Brownian motion and 
 were  first introduced, to our knowledge,  by Derrida and Spohn \cite{DerriSpohn88}.

In this paper we focus on this latter class of models. 
They  can be constructed as follows. On some abstract probability space $(\O,\FF,\P)$, 
define a supercritical Galton-Watson (GW) 
 tree. The offspring distribution, $\{p_k\}_{k\in \N}$, is normalised for convenience  such that  
 $\sum_{i=1}^\infty p_k=1$,
$\sum_{k=1}^\infty k p_k=2$ , and the second moment,  $K=\sum_{k=1}^\infty k(k-1)p_k$ is 
assumed finite. 
We fix a time horizon $t>0$. 
We denote the number of individuals ("leaves") of the tree at time $t$ by 
$n(t)$ and label the leaves at time $t$ by $i_1(t), i_2(t),\dots, i_{n(t)}(t)$. 
For given $t$ and for  $s\leq t$, it is convenient to let $i_k(s)$ denote the
ancestor of particle $i_k(t)$ at time $s$. Of course, in general there will be several indices
 $k,\ell$ such that $i_k(s)=i_\ell(s)$. The time of the most recent common ancestor of $i_k(t)$ 
 and $i_\ell(s)$ is given, for $s, r\leq t$, by
\be\Eq(gw.1)
d(i_k(r),i_\ell(s))\equiv \sup\{u\leq s\land r: i_k(u)=i_\ell(u)\}.
\ee
We denote by $\FF^{\hbox{\tiny tree}}_s, s\in \R_+$ the $\s$-algebra generated by the Galton-Watson 
process up to time $s$. On the same probability space we will now construct,
for given $t$, and for any realisation of the GW tree, a Gaussian process as follows.

 Let $A:[0,1]\rightarrow [0,1]$ be a right-continuous non-decreasing function.
 We define a Gaussian process, $x$, labelled by the tree (up to time $t$),
 i.e. by $\{i_k(s)\}_{1\leq k\leq n(t)}^{ 0\leq s\leq t}$, with covariance, 
 for $0\leq s,r\leq t$ and $k,\ell \leq n(t)$ 
\be\Eq(variance.2)
\E \left[x_k(s)x_\ell(r)\right] = tA\left(t^{-1}d(i_k(r),i_\ell(s)\right).
\ee

The existence of such a process is shown easily through a construction as time 
changed branching Brownian motion. Note first that, in the case when $A(x)=x$, this process is
standard branching Brownian motion \cite{Moyal,Skorohod64}. For general  $A$, 
the models 
can by constructed from \emph{time changed} Brownian motion as follows.
Let 
\be\Eq(intspeed.2)
\S^2(s)= t A(s/t).
\ee
Note that $\S^2$ is almost everywhere differentiable and denote by 
$\s^2(x)$ its derivative wherever it exists. 
Define the process  $\{B^\S_s\}_{0\leq s\leq t}$ 
 on $[0,t]$ as time change of ordinary 
Brownian motion, $B$, via
\be\Eq(speedy.1)
B^\S_s=B_{\S^2(s)}.
\ee 
Branching Brownian motion with speed function $\S^2$ is constructed like ordinary Brownian 
motion, except that if a particle splits at some time $s<t$, then the offspring particles perform 
variable speed Brownian motions with speed function $\S^2$, i.e. they  are independent copies
of  $\{B^\S_r-B^\S_s\}_{t\geq r\geq s}$, all starting at the position of the parent particle at time 
$s$.
We  refer to these processes as \emph{variable speed branching Brownian motion}. 
This class of processes, labelled by the different choices of functions $A$, provides an 
interesting set of examples to study the possible limiting extremal processes for 
correlated 
random variables. The ultimate goal will be to describe the extremal processes in 
dependence 
on the function $A$.

\begin{remark} Strictly speaking, we are not talking about a single stochastic process, but 
about a family, $\{x^t_k(s),k\leq n(s)\}_{s\leq t}^{t\in \R_+}$, of processes with finite time horizon, 
indexed by that horizon, $t$. That dependence on $t$ is usually  not made explicit in order not to 
overburden the notation.
\end{remark}

Branching Brownian motion has received a lot of attention over the last decades, with a strong 
focus on the properties of extremal particles. We mention the seminal contributions of McKean 
\cite{McKean}, Bramson \cite{B_M, B_C}, Lalley and Sellke \cite{LS}, and Chauvin and Rouault 
\cite{chauvin88,chauvin90} on the connection to the Fisher-Kolmogorov-Petrovsky-Piscounov 
(F-KPP) equation \cite{fisher37, kpp} and on the distribution of the rescaled maximum. In 
recent years, there has been a revival of interest in BBM with numerous contributions, including 
the construction of the full extremal process by A\"\i d\'ekon et al. \cite{abbs} and 
Arguin et al. \cite{ABK_E}.
For a review of these developments see, e.g., the recent survey by Gou\'er\'e \cite{gouere} or 
the lecture notes \cite{BoPrag13}.
Variable speed branching Brownian motion (as well as random walk) has    recently been 
investigated by 
Fang and Zeitouni \cite{FZ_RW,FZ_BM}, Maillard and Zeitouni \cite{MZ}, Mallein \cite{Mallein}, and the present authors \cite{BovHar13}.

Naturally, the same construction can be done for any other family of trees. It is widely believed 
(see \cite{ZeitouniLN}) that the resulting structures are very similar, with only details depending 
on the underlying tree model. 
More importantly, it is believed that the extremal structure in more general Gaussian 
processes, such as mean field spin glasses \cite{BoKi1,BoKi2}  or the Gaussian free field 
\cite{ZeitouniLN} are of the same type; considerable progress in this direction has been made 
recently by  Bramson, Ding, and Zeitouni \cite{BraDiZei} and by Biskup and Louidor \cite{BisLou13}.

We are interested in understanding the nature of the extremes of our processes in dependence 
on the properties of the covariance functions $A$.
The case when $A$ is a step function with finitely many steps corresponds 
to Derrida's 
GREMs \cite{GD86b,BK1}, the only difference being that the deterministic 
binary tree of 
the GREM is replaced by a Galton-Watson tree. 
It is very easy to treat this case. 

 The case when $A$ is arbitrary has been dubbed 
CREM in \cite{BK2} (and treated for binary regular trees). In that case the leading order of the 
maximum was obtained, as well as the genealogical description of the 
Gibbs measures; this 
analysis carries over mutando mutandis to the analogous  BBM situation. 
The finer analysis of 
the extremes is, however, much more subtle and in general still open.
Fang and Zeitouni  \cite{FZ_BM} have obtained the order of the  corrections (namely $t^{1/3}$) 
in the case when $A$ is strictly concave and continuous. These  
corrections come naturally from 
the probability of a Brownian bridge to stay away from a curved
line, which was earlier analysed by Ferrari and Spohn \cite{FerSpo}.
There are, however, no results on the extremal process or the law of the 
maximum.  

Another rather tractable situation occurs when $A$ is a piecewise linear 
function.  
The simplest case here corresponds
to choosing a speed that takes just two values, i.e.
\be\Eq(speedy.2)
\s^2(s)=\begin{cases}
 \s^2_1, &\,\hbox{for}\,\,0\leq s< tb,\\
 \s^2_2, &\,\hbox{for}\,\,bt\leq s\leq t,
 \end{cases}
 \ee
 with $\s_1^2b+\s_2^2(1-b)=1$. In this case, 
 Fang and Zeitouni  \cite{FZ_BM}
have  obtained the correct order of the logarithmic  
corrections.  
  This case was fully analysed in a recent paper of ours 
  \cite{BovHar13}, where we provide the construction of the 
  extremal processes. 
  
In the present paper, we present the full picture in the case where  
$A(x)<x$ for all $x\in (0,1)$, and the slopes of $A$ at $0$ and at $1$ are different from $1$. We 
show that 
there is a large degree of universality in that the limiting extremal processes are those that 
emerged in the two-speed case, and that they depend only on the slopes of $A$ at $0$ and at 
$1$. 

The critical cases,  $A(x)\leq x$, involve, besides the well-understood standard BBM, a 
number of different situations  that can be quite 
tricky, and we postpone this analysis to a forthcoming publication.

\subsection{Results}

We  need some mild technical assumptions on the covariance function.  Let $A:
[0,1]\rightarrow [0,1]$ be a right-continuous, non-decreasing function that  satisfies the following 
three conditions:
\begin{itemize}
\item[(A1)] For all $x\in (0,1)$: $A(x)<x$, $A(0)=0$ and $A(1)=1$.
\item[(A2)]There exists $\d_b>0$ and functions $\overline{B}(x)$, $\underline{B}(x):
[0,1]\rightarrow [0,1]$ that are twice differentiable in $[0,\d_b]$ with bounded second 
derivatives, such that
\be\Eq(ass.2)
\underline{B}(x)\leq A(x)\leq  \overline{B}(x),\quad \forall x\in[0,\d_b]
\ee
with $\overline{B}'(0)=\underline{B}'(0)\equiv A'(0)$.
\item[(A3)]There exists $\d_e>0$ and functions $\overline{C}(x)$, $\underline{C}(x):
[0,1]\rightarrow [0,1]$ that are twice differentiable in $[1-\d_e,1]$ with bounded second 
derivatives, such that
\be\Eq(ass.3)
\underline{C}(x)\leq A(x)\leq  \overline{C}(x),\quad \forall x\in[1-\d_e,1]
\ee
with $\overline{C}'(1)=\underline{C}'(1)\equiv A'(1)$. The case $A'(1)=+\infty$ is allowed. 
This is to be understood in the sense that,  for all $\rho <\infty$, there exists $
\varepsilon>0$ such that, for all 
 $x\in [1-\varepsilon,1]$, $A(x) \leq 1-\rho(1-x)$. 
\end{itemize}

For standard BBM, $\bar x(t)$, recall that 
Bramson \cite{B_M} and Lalley and Sellke \cite{LS} have shown that
\be\Eq(old.1)
\lim_{t\uparrow \infty} \P\left(\max_{k\leq n(t)}\bar x_k(t)-m(t) \leq y\right)
=\o(x) =\E\left[ \eee^{-CZ\eee^{-\sqrt 2 y}}\right],
\ee
where $m(t)\equiv \sqrt 2 t-\frac{3}{2\sqrt 2} \log t$, $Z$ is a random variable, the limit of the so 
called \emph{derivative martingale}, and $C$ is a constant.

In \cite{ABK_E} (see also 
\cite{abbs}  for a different proof) it was shown that the extremal process, 
\be 
\Eq(old.2)
\lim_{t\uparrow\infty} \wt\EE_t\equiv \lim_{t\uparrow\infty}\sum_{k=1}^{n(t)} \d_{\bar x_k(t)-m(t)}
=\wt\EE,
\ee
exists in law, and $\wt\EE$ is of the form 
\be\Eq(old.3)
\wt\EE =\sum_{k,j}\d_{\eta_k+\D^{(k)}_j},
\ee
where $\eta_k$ is the $k$-th atom of a Cox process 
\cite{Cox55}) directed by the random 
measure $CZ \eee^{-\sqrt 2 y}dy$, with $C$ and $Z$ as before. 
$\D^{(k)}_i$ are the atoms of independent and identically distributed  point processes 
$\D^{(k)}$, which are the limits in law
of 
\be
\sum_{j\leq n(t) }\d_{\tilde x_i(t)-\max_{j\leq n(t)}\tilde x_j(t)},
\Eq(old.4)
\ee 
where $\tilde x(t)$ is BBM conditioned on the event $\max_{j\leq n(t)} \tilde x_j(t)\geq \sqrt 2 t$.

The main result of the present paper is the following theorem.

\begin{theorem}\TH(maintheo) Assume that $A:[0,1]\to[0,1]$ satisfies (A1)-(A3). 
Let $A'(0) = \s_b^2<1$ and  $A'(1)=\s_e^2>1$. Let 
$\tilde m(t) =\sqrt 2 t-\frac 1{2\sqrt 2} \log t$.  
Then
there is a constant $\wt C(\s_e)$ depending only on $\s_e$ and a random variable $Y_{\s_b}$ 
depending only on 
$\s_b$ such that 
\begin{itemize}
\item[(i)]
\be\Eq(maintheo.1)
\lim_{t\uparrow \infty}\P\left(\max_{1\leq i\leq n(t)}x_i(t)-\tilde m(t) \leq x\right) = \E\left[ \eee^{- \wt 
C(\s_e)Y_{\s_b} \eee^{-\sqrt 2 x}}\right].
\ee
\item[(ii)] The point process
\be\Eq(maintheo.2)
\sum_{k\leq n(t)} \d_{x_k(t)-\tilde m(t)}\to \EE_{\s_b,\s_e}=
\sum_{i,j} \d_{p_i+\s_e\L^{(i)}_j},
\ee
as $t\uparrow\infty$, in law, where the $p_i$ are the atoms of a Cox 
process on $\R$ 
directed by the random measure $\wt C(\s_e)Y_{\s_b}\eee^{-\sqrt 2x}dx$, and the 
$\L^{(i)}$ are the limits of the processes as in 
\eqv(old.4), but conditioned 
 on the event $\{\max_k \tilde x_k(t)\geq \sqrt 2 \s_et\}$.
 \item[(iii)]  
 If $A'(1)=\infty$, then $\wt C(\infty)=1/\sqrt {4\pi}$, and $\L^{(i)}=\d_0$, 
 i.e. the limiting process is a Cox process.
\end{itemize}
The random variable $Y_{\s_b}$ is the limit of the uniformly integrable martingale
\be\Eq(mckean.0)
Y_{\s_b}(s)=\sum_{i=1}^{n(s)}\eee^{-s(1+\s_b^2)+\sqrt{2}\s_b\bar x_i(s)},
\ee
where $\bar x_i(s)$ is standard branching Brownian motion.
\end{theorem}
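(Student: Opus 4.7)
The plan is to reduce the general case to the two-speed case treated in \cite{BovHar13} via Gaussian comparison. The central observation is that, since $A(x)<x$ strictly on $(0,1)$ and $A$ is pinched between smooth functions with matching first derivatives at $0$ and $1$ by (A2)--(A3), the limiting extremal process should depend only on the slopes $\s_b^2=A'(0)$ and $\s_e^2=A'(1)$. Heuristically, the extremal particles split off from their common ancestor within time $O(1)$ of zero (the initial slope dictates the McKean-type martingale $Y_{\s_b}$ feeding the intensity of the Cox directing measure), travel together through the bulk where any splitting would cost an order-$t$ disadvantage incompatible with reaching $\tilde m(t)$, and branch once more within $O(1)$ of $t$ (the terminal slope then produces the clusters $\L^{(i)}$ and the constant $\wt C(\s_e)$).

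First I would construct, for each small $\ve>0$, two auxiliary covariance functions $A^{\uparrow,\ve}$ and $A^{\downarrow,\ve}$ that are piecewise smooth with initial slope $\s_b^2$ and terminal slope $\s_e^2$ and sandwich $A$, i.e.
\be
A^{\downarrow,\ve}(x)\leq A(x)\leq A^{\uparrow,\ve}(x),\qquad x\in[0,1].
\ee
On $[0,\d_b]$ and $[1-\d_e,1]$ I would use the smooth bounds $\underline{B},\overline{B}$ and $\underline{C},\overline{C}$ supplied by (A2)--(A3); on the bulk I would take affine interpolations that remain strictly below the identity, which is possible by (A1). Both $A^{\uparrow,\ve}$ and $A^{\downarrow,\ve}$ then fall in the class of (at most three-speed) covariances analysed in \cite{BovHar13}, and the main results of that paper identify the limiting extremal process of each as $\EE_{\s_b,\s_e}$ when $\ve\downarrow 0$ followed by $\d_b,\d_e\downarrow 0$, since only the initial and terminal slopes survive in the limit.

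The comparison itself proceeds through a Gaussian interpolation between $x$ and $x^\uparrow$ (and analogously $x^\downarrow$), exploiting the fact that all three processes share marginal variance $t$ at the leaves. For a nonnegative continuous test function $\vf$ of compact support, Gaussian integration by parts in the spirit of Kahane's inequality reduces the difference of Laplace functionals $\E[\exp(-\int \vf\,d\EE_t^{\uparrow})]-\E[\exp(-\int\vf\,d\EE_t)]$ to a double sum over leaves $(k,\ell)$ of $t(A^{\uparrow,\ve}(d(i_k,i_\ell)/t)-A(d(i_k,i_\ell)/t))$ weighted by two-point extremal moments. By the matched slopes at the endpoints, $A^{\uparrow,\ve}-A=O(x^2)$ near $0$ and $O((1-x)^2)$ near $1$, while $A^{\uparrow,\ve}-A\leq\ve$ on the bulk, so coupled with truncated second-moment estimates on pairs of near-extremal particles (imported from the two-speed analysis in \cite{BovHar13}) these contributions vanish as $t\to\infty$ and then $\ve\downarrow 0$, squeezing the process $x$ between sandwiching processes sharing the same limit.

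The main obstacle is the extremal-process-level Gaussian comparison. The one-dimensional statement \eqv(maintheo.1) follows from a direct Slepian-type bound, but the convergence \eqv(maintheo.2) of the full point process requires uniform-in-$t$ control of the two-point density of extremal particles near $\tilde m(t)$, together with tightness of the avoided-pair genealogies. This is exactly where strict inequality $A(x)<x$ and the twice-differentiability in (A2)--(A3) are indispensable: the former rules out macroscopic genealogical scales $d\in(\e t,(1-\e)t)$ from contributing to extremes, while the latter provides the quadratic-order control on the boundary error needed to absorb the integration-by-parts weights. Assertion (iii), for $A'(1)=+\infty$, follows by taking $\s_e\to\infty$ in the sandwich, noting that the clusters $\L^{(i)}$ collapse to $\d_0$ while $\wt C(\s_e)$ stabilises at $1/\sqrt{4\pi}$, consistent with the degenerate terminal slope interpretation in (A3).
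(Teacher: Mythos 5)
Your proposal captures the right high-level idea -- reduce to (near-)two-speed BBM by Gaussian comparison, with the slopes at $0$ and $1$ dictating the limiting objects -- and this is indeed the route the paper takes. However, there are several concrete gaps.

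First, you ask for a \emph{global} sandwich $A^{\downarrow,\ve}\leq A\leq A^{\uparrow,\ve}$ on all of $[0,1]$, with smooth pieces near the endpoints and an affine interpolation in the bulk, and you claim the resulting processes fall into the class treated in \cite{BovHar13}. But \cite{BovHar13} treats \emph{exactly} two-speed BBM, not three-speed or smooth-near-endpoint covariances, so the reduction you propose is not actually available. The paper instead takes two-piece \emph{linear} functions $\overline{A},\underline{A}$ (genuinely two-speed after a slight extension of \cite{BovHar13} to $t$-dependent slopes) and only requires $\underline{\Sigma}^2\leq\Sigma^2\leq\overline{\Sigma}^2$ on the $t$-dependent boundary windows $\Sigma^2(s)\in[0,t^{1/3}]\cup[t-t^{1/3},t]$; in the bulk the comparison functions may cross $A$. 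A global sandwich by a two-speed covariance is in general not available, and the boundary-only sandwich is precisely what makes the rest of the argument necessary.

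Second, once the sandwich holds only near the endpoints, the Gaussian integration by parts produces a bulk term whose sign is \emph{not} controlled. The paper handles this by (a) a path localisation step (Proposition \thv(prop.loc.1)), which inserts a tube condition $\1_{x^h_i\in\TT^\g_{t,\bar I,\Sigma_h^2}}$ into the interpolation, and (b) a delicate three-part decomposition of the integration-by-parts term into a vanishing complement term (Lemma \thv(lem.zero)), a bulk term $(\overline{T1})$ bounded using $s-\Sigma^2(s)\geq Mt$ on the interior (Lemmas \thv(lem.t1), \thv(lem.t3)), and a boundary term $(\overline{T2})$ whose \emph{sign} is fixed by the endpoint sandwich (Lemma \thv(lem.t2)). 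Your "$\leq\ve$ on the bulk plus truncated second moments" argument is too weak: the covariance difference in the bulk is order $\ve t$, and $n(t)^2\sim e^{2t}$, so without the exponential suppression furnished by the strict inequality $A(x)<x$ (giving $e^{-Mt}$) and the tube condition, the bulk does not obviously vanish. The strict inequality (A1) is not just ruling out genealogy -- it is what produces the quantitative decay in Lemma \thv(lem.t3).

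Third, you would establish (i) by a direct Slepian bound. This again relies on a global covariance ordering, which as noted is not achievable within the needed comparison class. The paper instead derives (i) as a corollary of the full point-process convergence (ii) via \eqv(tight.max), and (ii) is established through Laplace-transform convergence of the counting variables $\NN_u(t)$ plus a first-moment tightness estimate (Proposition \thv(prop.tight)). Also note that a nontrivial piece is showing that the $t$-dependent slopes in $\overline{A},\underline{A}$ converge to the right McKean martingale $Y_{\s_b}$ (Lemma \thv(lem.limit) and the choice of $\D(t)=(\d^<(t))^{-1/2}$), which your sketch does not address. Your treatment of (iii) is essentially in the right spirit, though the paper needs the Palm-measure description of Chauvin--Rouault--Wakolbinger to show $\L_{\s_e}\to\d_0$.
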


\begin{remark} In Theorem 7.7 of \cite{BovHar13} the  constant $\wt C(\s_e)$ is characterised by the tail behaviour of 
 solutions to the F-KPP equation, namely
 \be\Eq(constant.1)
\wt C(\s_e)\equiv  \s_e\lim_{t\to\infty}e^{\sqrt{2}x}e^{x^2/2t}t^{1/2}u(t,x+\sqrt{2}t),
\ee
where $x\equiv \sqrt2(\s_e-1)t$,  and $u$ solves the F-KPP equation 
\be\Eq(fkpp)
\del_t u(t,x)=\frac 12 \del_x^2u(t,x) +(1-u(t,x))-\sum_{k=1}^\infty p_k (1-u(t,x))^k,
\ee
with initial condition $u(0,x)=\1_{x\leq 0}$. 
\end{remark}

\begin{remark} The special case of Theorem \thv(maintheo) when $A$ consists of two linear 
segments  was 
obtained in \cite{BovHar13}. Theorem \thv(maintheo) shows that the limiting objects under 
conditions $(A1)-(A3)$ are universal and depend only on the slopes of the covariance function 
$A$ at $0$ and at $1$. This could have been guessed,
but the rigorous proof turns out to be quite involved.  
Note that $\s_e=\infty$ is allowed. In that 
case  the extremal  process is just a mixture of Poisson point processes. 
If $\s_b=0$, then $Y_{\s_b}$ is just an exponential random variable of mean $1$. 
We call $(Y_{\s_b}(s))_{s\in \R_+}$ the \emph{McKean martingale}.
\end{remark}

 \subsection{Outline of the proof}
 The proof of Theorem \thv(maintheo) is based on the corresponding result 
 obtained in 
 \cite{BovHar13} for the case of two speeds, and  on a Gaussian 
 comparison method. We start 
 by showing the localisation of paths, namely that the paths of all particles 
 that reach a hight of 
 order $\tilde m(t)$ at time $t$ has to lie within a certain tube. Next, we
  show tightness of the  extremal process. 
 
 The remainder of the paper is then concerned with proving the  
 convergence of the finite dimensional
 distributions through  Laplace transforms. We introduce 
 auxiliary two speed BBM's  
whose covariance functions approximate $A$ well around $0$ 
and $1$. Moreover 
  we choose them in such a way that their covariance functions lie above 
  respectively below $A$ in a neighbourhood of $0$ and $1$ 
  (see Figure \ref{figA}).
 
We then use Gaussian comparison methods to compare the Laplace 
transforms. The 
Gaussian comparison comes in three main steps. In a first step we 
introduce the usual interpolating 
process and introduce a localisation  condition on its paths. 
In a second step we justify a certain integration 
by parts formula, that is adapted to our setting. Finally, the resulting 
quantities are decomposed 
into a part with controlled  sign and a part that converges to zero.

\begin{figure}[hbtp]
 \includegraphics[width=10cm]{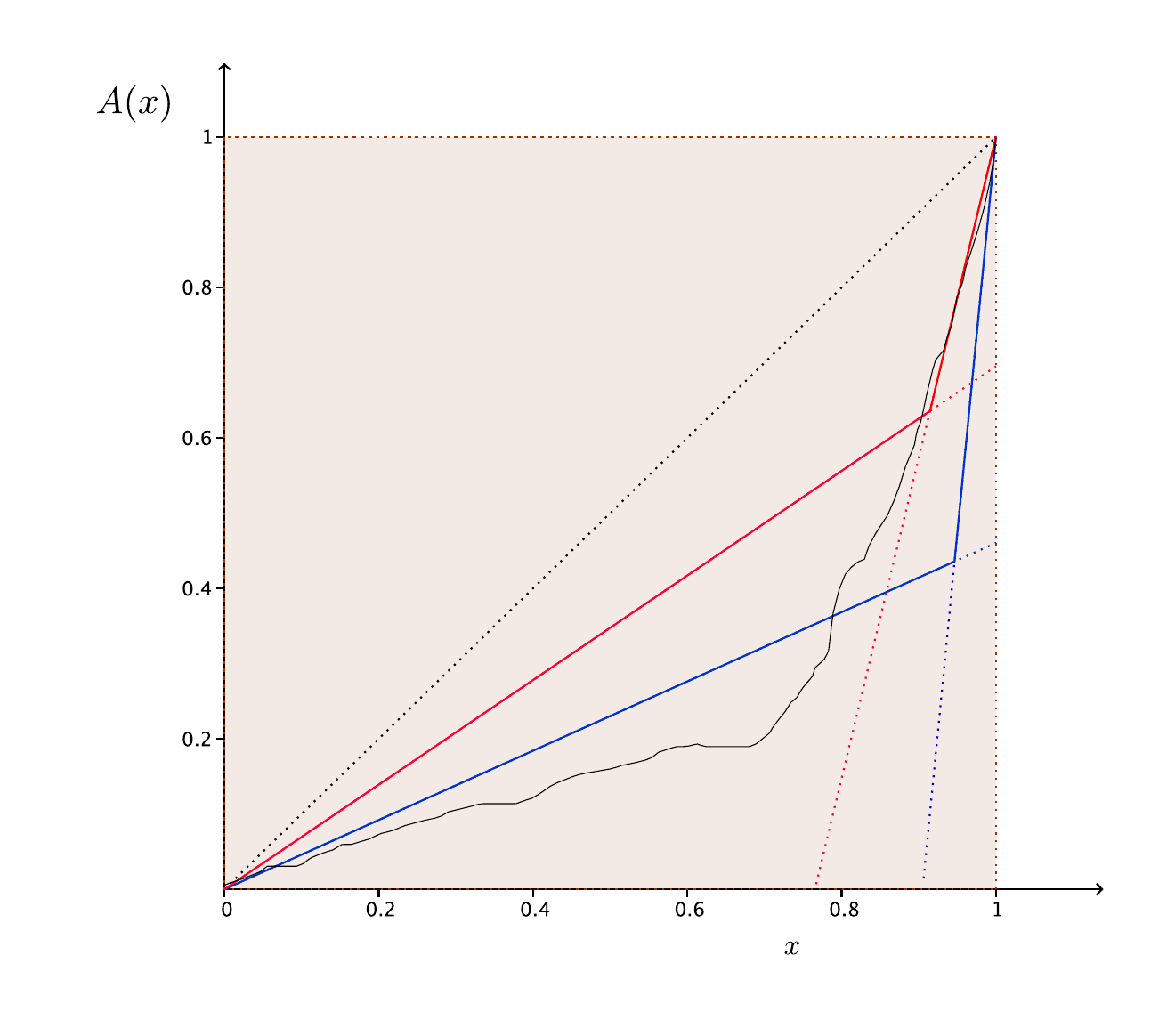}
 \caption{Gaussian Comparison: The extremal process of BBM with covariance $A$ 
 (black curve) is compared to process with covariances functions 
  $\overline A$ (red curve), respectively $\underline A$  (blue, curve).}
 \label{figA}
\end{figure}
 
\noindent\textbf{Acknowledgements.} We thank an anonymous 
 referee for a very careful reading of our paper and for numerous valuable suggestions.

\section{Localization of paths}

In this section we show where the paths of particles that are extreme at time $t$
are localised. This is essentially inherited from properties of the standard Brownian bridge.  For a given speed function $\S^2$, and a subinterval $I\subset [0,t]$, define the following events on the space of paths, $X:\R_+\to\R$,
\be\Eq(path.2)
\TT_{t,I,\Sigma^2}^\g=\left\{X \Big\vert \forall s: s\in I:\left\vert X(s)-\sfrac{\Sigma^2(s)}{t}X(t)\right\vert<(\Sigma^2(s)\wedge (t-\Sigma^2(s)))^\gamma \right\}.
\ee

\begin{proposition}\TH(prop.loc.1)
Let $x$ denote  variable speed BBM with covariance function $A$. For $0\leq r< t$, set 
  $I_r\equiv \{s: \S^2(s)\in [r,t-r]\}$.
 For any  $\g >\frac{1}{2}$ and 
for all  $d\in\R$,  for all $\e>0$, there exists $r_0<\infty$ such that, for $r>r_0$ and for all $t>3r$,
\be\Eq(path.1)
\P\left(\exists k\leq n(t):\left\{ x_k(t)>\tilde m(t)+d\right\}\wedge \left\{x_k\not \in \TT_{t,I_r, \Sigma^2}^\g\right\}\right)<\e.
\ee
\end{proposition}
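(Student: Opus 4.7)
The plan is to reduce to a single-path estimate via the first moment method, and then to use Brownian-bridge fluctuation bounds after the natural time change. Let $E_k(t)$ denote the event $\{x_k(t)>\tilde m(t)+d\}\cap\{x_k\notin\TT_{t,I_r,\Sigma^2}^\gamma\}$. By Markov's inequality and the many-to-one lemma, together with $\E[n(t)]=\eee^t$ under the normalisation $\sum_k k p_k=2$, I would write
\begin{equation}
\P(\exists k\le n(t): E_k(t)) \le \E\!\left[\sum_{k\le n(t)}\1_{E_k(t)}\right] = \eee^t\,\P\!\left(B^\Sigma_t>\tilde m(t)+d,\ B^\Sigma\notin\TT_{t,I_r,\Sigma^2}^\gamma\right),
\end{equation}
where $B^\Sigma$ is the time-changed Brownian motion of \eqv(speedy.1).

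Next, I would perform the time change $u=\Sigma^2(s)$, so that $B^\Sigma_s$ becomes a standard Brownian motion $W_u$ on $[0,t]$. The interval $I_r$ becomes $[r,t-r]$, and the tube condition becomes $|W_u-(u/t)W_t|<(u\wedge(t-u))^\gamma$ for $u\in[r,t-r]$. Conditioning on $W_t=y$, the process $\tilde W_u:=W_u-(u/t)y$ is a standard Brownian bridge on $[0,t]$, independent of $y$. Hence
\begin{equation}
\P\!\left(B^\Sigma\notin\TT_{t,I_r,\Sigma^2}^\gamma\,\big|\,W_t=y\right) = \P\!\left(\exists u\in[r,t-r]: |\tilde W_u|\ge (u\wedge(t-u))^\gamma\right) =: q_r(t),
\end{equation}
so the bridge exit probability factorises entirely from the endpoint distribution.

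The heart of the argument is to show that $q_r(t)\to 0$ as $r\to\infty$, uniformly in $t>3r$. I would decompose $[r,t/2]$ and $[t/2,t-r]$ into dyadic intervals $[2^k,2^{k+1}]$ and $[t-2^{k+1},t-2^k]$. On an interval $[2^k,2^{k+1}]\subset[r,t/2]$ the bridge $\tilde W_u$ has variance bounded by $u\le 2^{k+1}$, and Doob's $L^2$ maximal inequality applied to the martingale $W_u$, combined with a trivial bound on the linear correction $(u/t)W_t$, gives
\begin{equation}
\P\!\left(\sup_{u\in[2^k,2^{k+1}]}|\tilde W_u|\ge 2^{k\gamma}\right) \le C\exp\!\left(-c\,2^{k(2\gamma-1)}\right)
\end{equation}
for some $c>0$. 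Since $\gamma>1/2$, summing this geometric-type tail over all $k$ with $2^k\ge r$ (and symmetrically for $u$ near $t$) yields $q_r(t)\le C\exp(-c\,r^{2\gamma-1})$, which tends to $0$ as $r\to\infty$ uniformly in $t$.

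Finally, a routine Gaussian calculation using $\tilde m(t)=\sqrt 2\,t-(2\sqrt 2)^{-1}\log t$ gives $\tilde m(t)^2/(2t)=t-\tfrac{1}{2}\log t+o(1)$, so that
\begin{equation}
\eee^t\int_{\tilde m(t)+d}^\infty\frac{1}{\sqrt{2\pi t}}\eee^{-y^2/(2t)}\,dy = O\!\left(\eee^{-\sqrt 2\, d}\right),
\end{equation}
uniformly in $t$. Multiplying this bounded prefactor by $q_r(t)$ closes the argument. The main obstacle is the bridge estimate uniformly in $t$: the delicate regime is $u$ near $r$, where the tube threshold $u^\gamma$ is only polynomially larger than the standard deviation $\sim\sqrt{u}$, and the hypothesis $\gamma>1/2$ is precisely what converts the exit probabilities on dyadic intervals into a summable geometric-type series.
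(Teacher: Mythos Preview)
Your proposal follows the same architecture as the paper's proof: first-moment (many-to-one) reduction to a single time-changed Brownian path, the time change $u=\Sigma^2(s)$ to a standard Brownian motion on $[0,t]$, factorisation via the independence of the Brownian bridge $W_u-(u/t)W_t$ from the endpoint $W_t$, a Gaussian tail bound showing $\eee^t\P(W_t>\tilde m(t)+d)=O(1)$, and finally a uniform-in-$t$ bound on the bridge exit probability from the tube. The paper packages the last step as its Lemma~2.2, using a unit-interval decomposition and Bramson's Lemma~2.2(b); you use a dyadic decomposition instead, which is an equivalent route.

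One correction is needed in your bridge estimate. Doob's $L^2$ maximal inequality applied on $[0,2^{k+1}]$ only yields the polynomial bound
\[
\P\!\left(\sup_{u\le 2^{k+1}}|W_u|\ge 2^{k\gamma}\right)\le C\,2^{k(1-2\gamma)},
\]
not the exponential bound $C\exp(-c\,2^{k(2\gamma-1)})$ you claim. Fortunately the polynomial bound is already summable over $k\ge \log_2 r$ since $\gamma>1/2$, so your argument survives with $q_r(t)\le C r^{1-2\gamma}$ in place of the exponential decay; if you want the exponential rate (as the paper obtains), use the reflection principle or a direct Gaussian tail for the bridge rather than Doob's $L^2$. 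Note also that after conditioning on $W_t=y$ the process $W_u$ is no longer a martingale, so you should either invoke a maximal inequality directly for the Brownian bridge, or run the decomposition $\tilde W_u=W_u-(u/t)W_t$ unconditionally and control the linear correction $(u/t)W_t$ by a separate Gaussian tail bound (which does go through uniformly in $t>3r$).
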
 

\noindent To prove  Proposition $\thv(prop.loc.1)$ we need the following  lemma on Brownian bridges (see \cite{B_C}).

\begin{lemma}\TH(BB.basic)
Let $\g>\frac{1}{2}$. Let $\xi$ be a Brownian bridge from $0$ to $0$ in time $t$. Then, for all $\e>0$,  there exists $r_0<\infty$ such that, for $r>r_0$ and for all $t>3r$,
\be\Eq(path.3.0)
\P\left(\exists s\in [r,t-r] :|\xi(s)|>(s\wedge(t-s))^\g\right)<\e.
\ee
More precisely,
\be\Eq(path.3.1)
\P\left(\exists s\in [r,t-r]:|\xi(s)|>(s\wedge(t-s))^\g\right)<8\sum_{k=\lfloor r\rfloor}^\infty k^{\frac{1}{2}-\g}\eee^{-k^{2\g-1}/2}.
\ee
\end{lemma}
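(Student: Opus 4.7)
The plan is to exploit the time-reversal symmetry of the Brownian bridge, discretize the interval $[r, t-r]$ into unit sub-intervals, and apply a maximal inequality on each piece. First, since $\xi(\cdot) \laweq \xi(t-\cdot)$ and since $s \wedge (t-s)$ is invariant under time reversal, it suffices to bound the probability of the event restricted to $s \in [r, t/2]$ and double the resulting estimate. On that half, $(s \wedge (t-s))^\gamma = s^\gamma$.

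Second, cover $[r, t/2]$ by the unit intervals $[k, k+1]$ for integer $k \in \{\lfloor r \rfloor, \lfloor r \rfloor+1, \dots\}$. Since $s^\gamma \geq k^\gamma$ on $[k, k+1]$, a union bound reduces the task to estimating
\begin{equation*}
\sum_{k \geq \lfloor r \rfloor} \P\!\left(\sup_{s \in [k, k+1]} |\xi(s)| > k^\gamma\right).
\end{equation*}

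Third, on $[k, k+1]$ the marginal variance $s(t-s)/t$ of $\xi(s)$ is at most $k+1$, and conditionally on the endpoint values the process is a short Brownian bridge with $O(1)$ conditional fluctuation. A reflection-type maximal inequality for the bridge, combined with the Gaussian Mill's ratio $\P(\mathcal{N}(0,1) > x) \leq (x\sqrt{2\pi})^{-1}\, \eee^{-x^2/2}$, yields
\begin{equation*}
\P\!\left(\sup_{s \in [k, k+1]} |\xi(s)| > k^\gamma\right) \;\leq\; 4\, k^{1/2 - \gamma}\, \eee^{-k^{2\gamma-1}/2},
\end{equation*}
where the factor $4$ absorbs a factor $2$ for the two-sided event $|\xi| > u$ and a factor $2$ from reflection relating the supremum to the pointwise Gaussian tail. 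Multiplying by the symmetry factor $2$ from the first step gives the quantitative bound \eqv(path.3.1). Since $\gamma > 1/2$ implies $2\gamma - 1 > 0$, the summand decays super-polynomially, the series converges, and its tail beyond $\lfloor r \rfloor$ tends to $0$ as $r \to \infty$ uniformly in $t > 3r$, yielding the qualitative bound \eqv(path.3.0).

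The main technical hurdle is establishing the maximal inequality on each unit interval with the sharp polynomial prefactor $k^{1/2-\gamma}$: a crude application of Borell-TIS recovers only the exponential factor, while the Mill's ratio improvement requires transferring the pointwise bound to the supremum without loss. This can be done either by a direct reflection argument for the Brownian bridge (using the explicit law of the maximum of a short bridge) or by conditioning on the endpoint values $\xi(k), \xi(k+1)$ and invoking the classical reflection principle for Brownian motion for the interior fluctuation.
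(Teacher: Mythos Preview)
Your proposal is correct and follows essentially the same strategy as the paper: symmetry reduction to $[r,t/2]$ (factor $2$), discretization into unit intervals, a maximal inequality on each piece, and summation. The only notable difference is in how the per-interval bound is obtained: the paper enlarges $[k-1,k]$ to $[0,k]$ and directly invokes Bramson's Lemma~2.2(b) from \cite{B_C} to get $\P(\exists s\in[0,k]:|\xi(s)|>(k-1)^\gamma)\leq 4(k-1)^{1/2-\gamma}\eee^{-(k-1)^{2\gamma-1}/2}$, whereas you stay on $[k,k+1]$ and sketch a local argument via conditioning on the endpoints plus reflection. Both routes work; the paper's is shorter because it outsources the maximal inequality to a citation, while yours is more self-contained but, as you acknowledge, leaves the sharp constant $4$ to be verified carefully (the variance on $[k,k+1]$ is $\leq k+1$ rather than $k$, so the exponent comes out as $k^{2\gamma}/(2(k+1))$ rather than exactly $k^{2\gamma-1}/2$, which is harmless for the qualitative statement but requires a small adjustment for the precise quantitative bound).
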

\begin{proof}
The probability in \eqv(path.3.0) is bounded from above by
\bea\Eq(path.4)
&&\sum_{k=\lceil r\rceil}^{\lceil t-r\rceil}\P\left(\exists s\in [k-1,k]:|\xi(s)|>(s\wedge(t-s))^\g\right)\nonumber\\
\leq && 2\sum_{k=\lceil r\rceil}^{\lceil t/2\rceil}\P\left(\exists s\in [k-1,k]:|\xi(s)|>(s\wedge(t-s))^\g\right),
\eea
by the reflection principle for the Brownian bridge. This is  bounded from above by
\be\Eq(path.5)
2\sum_{k=\lceil r\rceil}^{\lceil t/2\rceil}\P\left(\exists s\in [0,k]:|\xi(s)|>(k-1)^\gamma\right). 
\ee
Using the bound of Lemma 2.2 (b) of \cite{B_C} we have  
\be\Eq(path.6)
P\left(\exists s\in [0,k]:|\xi(s)|>(k-1)^\gamma\right)\leq 4(k-1)^{\frac{1}{2}-\g}\eee^{-(k-1)^{2\g-1}/2}.
\ee
Using this bound for each summand in \eqv(path.5) we obtain \eqv(path.3.1). Since the sum on the right-hand side of \eqv(path.3.1) is finite \eqv(path.3.0) follows.
\end{proof}

\begin{proof}[Proof of Proposition \thv(prop.loc.1)]
 Using a first moment method, the probability in \eqv(path.1) is bounded from above by
 \be\Eq(path.7)
 \eee^t\P\left(B_{\Sigma^2(t)}>\tilde m(t)+d , B_{\Sigma^2(\cdot)}\not\in\TT_{t,I_r,\Sigma^2}^\g\right).
 \ee
  Since   
  $\Sigma^2(s)$ is an non-decreasing function on $[0,t]$ with $\Sigma^2(t)=t$,
  the expression in  \eqv(path.7) is bounded from above by
 \be\Eq(path.8)
 \eee^t\P\left(\left\{B_t>\tilde m(t)+d\right\}\land \left\{\exists s\in [r,t-r]:\left\vert B_s-\frac{s}{t}B_t\right\vert>(s\wedge (t-s))^\g\right\}\right).
 \ee
Now,   $\xi(s)\equiv B_s-\frac{s}{t}B_t$ is the
 Brownian bridge from $0$ to $0$ in time $t$, and it is 
well known (see e.g. Lemma 2.1 in \cite{B_C}) that  $\xi(s)$ is  independent of $B_t$, 
for all $s\in [0,t]$. Therefore,   \eqv(path.8) is equal to
 \be\Eq(path.9)
 \eee^t\P\left(B_t>\tilde m(t)+d\right)\P\left(\exists s\in [r,t-r]:\left\vert \xi(s)\right\vert>(s\wedge (t-s))^\g\right).
 \ee
Using the standard Gaussian tail bound, 
\be\Eq(gaussian)
\int_{u}^\infty \eee^{-x^2/2}dx\leq u^{-1}\eee^{-u^2/2},\quad\text{for}\,\, u>0,
\ee
we have
 \bea\Eq(path.10)
 \eee^t\P\left(B_t>\tilde m(t)+d\right)
 &\leq& \eee^t \frac{\sqrt{t}}{\sqrt{2\pi}(\tilde m(t)+d)}\eee^{-(\tilde m(t)+d)^2/2t}\nonumber\\
 &\leq& \frac{t}{\sqrt{2\pi}(\tilde m(t)+d)}\eee^{-\sqrt{2}d}
 \leq M,
 \eea
 for some constant $M$ (depending on $d$), if $t$ is large enough.
 By Lemma  
 \thv(BB.basic) we can find  $r_0$ large enough such that for all $r\geq r_0$ and $t>3r$,
 \be\Eq(path.11)
 \P\left(\exists s\in [r,t-r]:\left\vert \xi(s)\right\vert>(s\wedge(t-s))^\g\right)<\e/M.
 \ee
 The bounds  \eqv(path.10) and \eqv(path.11) imply  that   \eqv(path.9) is smaller than
  $\e$.
\end{proof}

 \section{Proof of Theorem \thv(maintheo)}

In this section we  prove Theorem \thv(maintheo) assuming Proposition \thv(prop.number) below, whose proof will be postponed to the following two sections.

\begin{proof}[Proof of Theorem \thv(maintheo)]
We show the convergence of the extremal process
\be\Eq(tight.1)
\EE_t=\sum_{k\leq n(t)}\d_{x_k(t)-\tilde m(t)}
\ee
by showing the convergence of the finite dimensional distributions and tightness. Tightness of $(\EE_t)_{t\geq 0}$ is implied by the following bound on the 
number of particles above a level $d$ (see \cite{Res}, Lemma 3.20).   
\begin{proposition}\TH(prop.tight)
For any $d\in\R$ and $\e>0$, there exists $N=N(\e,d)$ such that, for all $t$ large enough,
\be\Eq(tight.6)
\P\left(\EE_t[d,\infty)\geq N\right)<\e.
\ee
\end{proposition}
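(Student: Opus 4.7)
My plan is to prove this tightness bound by a direct first moment computation followed by Markov's inequality. The essential observation, already implicit in the calculation carried out for the localisation proposition, is that the centering $\tilde m(t) = \sqrt 2 t - \frac{1}{2\sqrt 2}\log t$ is calibrated so that the expected number of particles above $\tilde m(t)+d$ stays uniformly bounded in $t$. This is in sharp contrast to standard BBM with centering $m(t) = \sqrt 2 t - \frac{3}{2\sqrt 2}\log t$, where the analogous first moment diverges like $t$ and tightness must be established via the derivative martingale and the cluster structure of the extremal process.

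First I would apply the many-to-one lemma. Conditionally on the Galton--Watson tree, the position of each leaf at time $t$ is distributed as $B^\S_t = B_{\S^2(t)}$, which, since $\S^2(t)=t$, is simply a centered Gaussian of variance $t$, independent of the covariance function $A$. Combined with $\E[n(t)]=\eee^t$, this yields
\be
\E[\EE_t[d,\infty)] = \E\!\left[\sum_{k\leq n(t)} \1_{x_k(t)\geq \tilde m(t)+d}\right] = \eee^t\,\P\!\left(B_t\geq \tilde m(t)+d\right).
\ee

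Next I would invoke the standard Gaussian tail bound \eqv(gaussian) together with the elementary expansion $(\tilde m(t)+d)^2/(2t) = t - \tfrac12\log t + \sqrt 2\, d + o(1)$. These combine to give $\eee^t\,\P(B_t\geq \tilde m(t)+d)\leq M(d)$ for some finite constant $M(d)$ and all $t$ sufficiently large; indeed this is precisely the content of \eqv(path.10), already established during the proof of Proposition \thv(prop.loc.1), where the estimate was used only for a fixed $d\in\R$ but in fact yields a constant that is of order $\eee^{-\sqrt 2 d}$ as $d\to\infty$. Markov's inequality then gives $\P(\EE_t[d,\infty)\geq N) \leq M(d)/N$, so choosing $N(\e,d) = \lceil M(d)/\e\rceil$ completes the proof. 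I do not anticipate any real obstacle: the entire statement reduces to the elementary Gaussian estimate \eqv(path.10), and reflects the favourable scaling built into the choice of $\tilde m(t)$; in particular the localisation Proposition \thv(prop.loc.1) itself is not needed for this step, since the factor $\sqrt t$ saved by $\tilde m(t)$ relative to $m(t)$ already compensates the Gaussian density.
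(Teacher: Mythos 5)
Your proposal is correct and follows essentially the same route as the paper: a first-moment (Markov/Chebyshev) bound on $\EE_t[d,\infty)$, reduced by many-to-one to $\eee^t\P(B_t>\tilde m(t)+d)$, which is bounded by a $d$-dependent constant via the Gaussian tail estimate \eqv(path.10). Your added observations about the calibration of $\tilde m(t)$ and the contrast with standard BBM are accurate commentary but not needed for the argument itself.
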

\begin{proof}
 By a first order Chebyshev inequality, for all $t$ large enough, 
 \be\Eq(tight.2)
 \P\left(\EE_t[d,\infty)\geq N\right)\leq \frac{1}{N}\eee^t\P\left(B_t>\tilde m(t)+d\right)\leq  \frac{M}{N} 
 \ee
 by \eqv(path.10), where $M>0$ is a constant that depends on $d$. Choosing $N>M/\e$  yields Proposition \thv(prop.tight).
\end{proof}

 To show the convergence of the finite dimensional distributions define, for $u\in\R$,
 \be\Eq(tight.3)
 \NN_{u}(t)=\sum_{i=1}^{n(t)}\1_{x_i(t)-\tilde m(t)>u},
 \ee
 that counts the number of points that lie above $u$. Moreover, we define the corresponding quantity for the process  $\EE_{\s_b,\s_e}$ (defined in 
 \eqv(maintheo.2)),
  \be\Eq(tight.4)
 \NN_u=\sum_{i,j}\1_{p_i+\s_e\L_j^{(i)}>u}.
 \ee
Observe that, in particular,
\be\Eq(tight.max)
\P\left(\max_{1\leq i\leq n(t)}x_i(t)-\tilde m(t)\leq u\right)=\P\left(\NN_{u}(t)=0\right).
\ee
The key step in the proof of Theorem \thv(maintheo) is the following proposition, that asserts the convergence of the finite dimensional distributions of the  process $\EE_t$.

 \begin{proposition}\TH(prop.number)
 For all $k\in\N$ and $u_1,\dots,u_k\in\R$
 \be\Eq(tight.5)
 \{\NN_{u_1}(t),\dots,\NN_{u_k}(t)\}\stackrel{d}{\to} \{\NN_{u_1},\dots,\NN_{u_k}\}
 \ee
 as $t\uparrow \infty$.
 \end{proposition}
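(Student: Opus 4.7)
The plan is to establish convergence of the joint Laplace transforms
\be
\Psi_t(\phi_1,\dots,\phi_k)\equiv \E\Bigl[\exp\Bigl(-\sum_{j=1}^k\phi_j\NN_{u_j}(t)\Bigr)\Bigr],\qquad \phi_j\ge 0,
\ee
which, since the $\NN_{u_j}(t)$ are integer-valued, determine the finite-dimensional laws. Writing $f(y)\equiv\sum_j\phi_j\1_{y>u_j}$, one has $\Psi_t(\phi)=\E[\exp(-\int f\,d\EE_t)]$. I would sandwich $\Psi_t$ between the Laplace functionals of two carefully designed two-speed BBMs, pass to the limit $t\to\infty$ using the results of \cite{BovHar13}, and only then let the auxiliary parameters converge.

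\textbf{Construction of the sandwich.} Fix $\varepsilon>0$. Using (A2)--(A3), construct piecewise-linear two-speed covariance functions $\underline A_\varepsilon,\overline A_\varepsilon$ with initial slopes $\s_b^2\mp\varepsilon$ and final slopes $\s_e^2\pm\varepsilon$, such that
\be
\underline A_\varepsilon(x)\le A(x)\le \overline A_\varepsilon(x)\quad\text{on}\quad [0,\d_b]\cup[1-\d_e,1].
\ee
On the middle window $(\d_b,1-\d_e)$ no such pointwise inequality need hold, and this absence is the central technical difficulty. Couple $x,\overline x^\varepsilon,\underline x^\varepsilon$ on a common Galton-Watson tree. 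Applying the main theorem of \cite{BovHar13} to the two-speed processes, their Laplace functionals converge to $\E[\exp(-\int f\,d\EE_{\s_b\mp\varepsilon,\s_e\pm\varepsilon})]$. Continuity of the limit $\EE_{\s_b,\s_e}$ in its parameters (via continuity of the McKean martingale limit $Y_{\s_b}$ in $\s_b$ and of $\wt C(\s_e),\L^{(i)}$ in $\s_e$, using the Feynman-Kac characterisation \eqv(constant.1)) then yields convergence of both bounds to $\E[\exp(-\int f\,d\EE_{\s_b,\s_e})]$ as $\varepsilon\downarrow 0$.

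\textbf{Gaussian comparison via Kahane interpolation.} To compare $\Psi_t$ with its upper counterpart $\Psi_t^{\overline\varepsilon}$, first restrict to the localization event $\cL_r$ on which every particle of $x,\overline x^\varepsilon,\underline x^\varepsilon$ exceeding height $\tilde m(t)-d$ has its ancestral path inside the tube $\TT^\gamma_{t,I_r,\Sigma^2}$ furnished by Proposition \thv(prop.loc.1), at a cost vanishing as $r\to\infty$. Smooth $f$ by a nonnegative $C^2$ approximation $f_\eta$, so that $\exp(-\int f_\eta\,d\EE_t)$ becomes a bounded $C^2$ functional $F_\eta$ of the Gaussian vector $(x_i(t))_{i\le n(t)}$ conditional on the tree. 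Introduce the interpolation $x^h\equiv\sqrt h\,x+\sqrt{1-h}\,\overline x^\varepsilon$, $h\in[0,1]$, with $x$ and $\overline x^\varepsilon$ coupled independently conditional on the tree. The Gaussian integration by parts formula gives
\be
\E[F_\eta(x)]-\E[F_\eta(\overline x^\varepsilon)]=\frac12\int_0^1\!\!\sum_{i,\ell\le n(t)}\!t\bigl[A(d(i,\ell)/t)-\overline A_\varepsilon(d(i,\ell)/t)\bigr]\E[\partial_i\partial_\ell F_\eta(x^h)]\,dh.
\ee

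\textbf{The main obstacle: the middle window.} Split the pair sum according to the location of the most recent common ancestor $d(i,\ell)$. For $d(i,\ell)\in[0,\d_b t]\cup[(1-\d_e)t,t]$ the sandwich construction fixes the sign of $A-\overline A_\varepsilon$, giving the required one-sided inequality up to $o_t(1)$. The hard part is the middle window $[\d_b t,(1-\d_e)t]$, where $A-\overline A_\varepsilon$ has no a priori sign. Here I would exploit (i) the strict gap $\inf_{x\in[\d_b,1-\d_e]}(x-A(x))>0$, which follows from (A1) and compactness, together with (ii) the path localization, to argue that pairs $(i,\ell)$ with common ancestor in the middle window whose descendants both reach height $\tilde m(t)+O(1)$ must have the ancestor's position at time $d(i,\ell)$ lie strictly below the critical slope $\sqrt{2}d(i,\ell)/t\cdot\tilde m(t)$ by a quantitative gap. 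A Gaussian tail bound, combined with the second moment structure of $\partial_i\partial_\ell F_\eta$, then shows that this middle contribution is $o_t(1)$ uniformly in $h\in[0,1]$ and $\eta$. Running the symmetric argument for $\underline x^\varepsilon$ and letting successively $t\to\infty$, $\eta\downarrow 0$, $r\to\infty$, $\varepsilon\downarrow 0$ closes the sandwich on $\E[\exp(-\int f\,d\EE_{\s_b,\s_e})]$ and proves Proposition \thv(prop.number).
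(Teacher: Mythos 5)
Your overall strategy---sandwiching the Laplace transform between those of two-speed approximants, interpolating \`a la Kahane, splitting the covariance difference by the location of the most recent common ancestor, and using sign control near the endpoints and a gap argument in the middle---is the same as the paper's. Two genuine differences deserve comment, one a different route, one a gap.

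\textbf{A different route: fixed $\varepsilon$ vs.\ $t$-dependent approximants.} You fix $\varepsilon>0$, sandwich $A$ on the edge windows with two-speed covariances of slopes $\s_b^2\mp\varepsilon$ and $\s_e^2\pm\varepsilon$, take $t\to\infty$ via the unmodified result of \cite{BovHar13}, and only then send $\varepsilon\downarrow 0$. This last step needs a continuity-in-parameters statement for the limiting Cox/cluster structure $\EE_{\s_b,\s_e}$, which you assert but do not prove; it is not established in \cite{BovHar13} and would require additional argument (e.g.\ joint continuity of $\wt C(\cdot)$, the cluster laws, and the McKean limit $Y_{\cdot}$). The paper avoids this entirely by instead making the slopes of $\overline A,\underline A$ depend on $t$ through quantities $\d^<(t),\d^>(t)\downarrow 0$ (see \eqv(delta) and \eqv(comp.3)--\eqv(comp.6)), so that the approximating slopes converge to $\s_b^2,\s_e^2$ as $t\uparrow\infty$ and the limit is identified directly. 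The price the paper pays is Lemma \thv(lem.limit), which extends the convergence of \cite{BovHar13} to this weakly $t$-dependent setting. Your route is legitimate in principle but trades one nontrivial lemma for another, and the one you rely on is not proved.

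\textbf{A gap: where the localization is inserted.} You localize the three processes $x,\overline x^\varepsilon,\underline x^\varepsilon$ \emph{before} the interpolation, and then apply Gaussian integration by parts to the unrestricted $F_\eta$. This is problematic for two reasons. First, localizing $x$ and $\overline x^\varepsilon$ does not localize the interpolated process $x^h=\sqrt h\,x+\sqrt{1-h}\,\overline x^\varepsilon$, yet localization of $x^h$ is precisely what you later invoke to control the middle window. Second, the localization indicator for $(x,\overline x^\varepsilon)$ is a function of the pair, not of $x^h$ alone, so the Kahane formula you write down (which is for functionals of $x^h$) does not apply to $F_\eta\cdot\1_{\cL_r}$. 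The paper handles this by inserting the localization condition \emph{on $x^h$} only after differentiating in $h$ (see \eqv(comp.17)), and then justifies the integration by parts in the presence of this path-dependent indicator by a discretization $G_{h,N}$ of the tube condition and the orthogonality $\E_B(x_i(t)\xi^h_i(t_\ell))=0$ of the endpoint and the bridge (see \eqv(comp.30')--\eqv(noweak.7)). Without this step the formula you use is not justified, and the middle-window estimate you sketch has no localization to lean on.

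A minor point: once the sandwich $A\le\overline A_\varepsilon$ holds on the chosen end windows and $\partial_i\partial_\ell F_\eta\ge 0$, the contribution has a definite sign \emph{exactly}, not merely ``up to $o_t(1)$''; the $o_t(1)$ qualifier you add is unnecessary. Note also that the region on which the linear bound $(\s_b^2+\varepsilon)x\ge A(x)$ actually holds is of size $O(\varepsilon)$, not the full $[0,\d_b]$ from (A2), so your windows must shrink with $\varepsilon$; this is harmless for your order of limits since the strict gap $\inf_{[\d_b(\varepsilon),1-\d_e(\varepsilon)]}(x-A(x))$ remains positive for each fixed $\varepsilon$, but should be made explicit.
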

 
 \noindent The proof of this proposition  will be postponed to the following sections.

Assuming the proposition, we can now conclude the proof of the theorem.
  The distribution of
$\{\NN_{u_1}(t),\dots,\NN_{u_k}(t)\}$ for all $k\in \N,u_1,\dots,u_k\in\R$ characterise the finite dimensional distributions of the point process $\EE_t$ since
the class of sets $\{(u,\infty), u\in \R\}$ form a $\Pi$-system  that generates  
$\BB(\R)$. Hence \eqv(tight.5) implies the convergence of the finite dimensional distributions of $\EE_t$ (see, e.g.,  Proposition 3.4 in \cite{Res}). 

Combining this observation with Propositions \thv(prop.tight), we obtain Assertion
(ii)  of 
Theorem \thv(maintheo). Assertion (i) follows immediately from Eq. 
\eqv(tight.max).

To prove Assertion (iii), we need to show that, as $\s^2_e\uparrow \infty$, it holds that $\wt C(\s_e) \uparrow 1/\sqrt {4\pi}$ and the processes $\L^{(i)}$ converge to the trivial process $\d_0$. 
Then, 
\be\Eq(infinity.1)
\EE_{\s_b,\infty} =\sum_{i}\d_{p_i},
\ee
where $(p_i,i\in \N)$ are the points of a Cox process directed by the  random 
 measure\hfill\eject $\frac{1}{\sqrt{4\pi}}Y_{\s_b}\eee^{-\sqrt{2}x}dx$. 

\begin{lemma}\TH(lem.infinity)
The point process $\EE_{\s_b,\s_e}$ converges in law, as $\s_e\uparrow \infty$, to the point process $\EE_{\s_b,\infty}$.

\end{lemma}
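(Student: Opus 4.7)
The plan is to show convergence of the Laplace functionals of $\EE_{\s_b,\s_e}$ to that of $\EE_{\s_b,\infty}$, since this characterises convergence in law for point processes in the vague topology (see, e.g., \cite{Res}, Ch.~3). Two ingredients are needed: (a) the behaviour of $\wt C(\s_e)$ as $\s_e\uparrow\infty$, and (b) the collapse of the cluster process $\L^{(i)}$ to $\d_0$ as $\s_e\uparrow\infty$.

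For (a), we start from the representation \eqv(constant.1). At the point $y=\sqrt 2 \s_e t$ with $\s_e$ large, the F-KPP solution $u(t,y)$ is exponentially small, so the nonlinear term in \eqv(fkpp) is negligible and $u$ is well approximated by the solution $v$ of the linearised equation $\del_t v=\tfrac12 \del_x^2 v + v$ with $v(0,\cdot)=\1_{(-\infty,0]}$, which explicitly reads
\[v(t,y)=\eee^t\int_{-\infty}^{0}\frac{1}{\sqrt{2\pi t}}\eee^{-(y-z)^2/(2t)}\,dz.\]
A direct computation using standard Gaussian tail asymptotics gives $\s_e \eee^{\sqrt 2 x}\eee^{x^2/(2t)}t^{1/2} v(t,\sqrt 2 \s_e t)=\tfrac{1}{\sqrt{4\pi}}+o(1)$ as $\s_e\to\infty$ uniformly for $t$ large, and a maximum principle bound on $u-v$ in the exponentially small regime concludes $\wt C(\s_e)\to 1/\sqrt{4\pi}$.

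For (b), $\L^{(i)}$ is the vague limit of $\sum_j \d_{\tilde x_j(t)-\max_k\tilde x_k(t)}$ under BBM conditioned on $\{\max_k\tilde x_k(t)\geq\sqrt 2 \s_e t\}$. For $\s_e$ large, the conditioning forces a single spine to have a highly atypical drift $\sqrt 2 \s_e$, while the remaining particles branch off and perform ordinary BBM whose natural ceiling grows only like $\sqrt 2 s$ in time $s$. The second-largest particle therefore lies at a gap from the maximum that diverges as $\s_e\uparrow\infty$. Quantitatively, a Girsanov tilt by drift $\sqrt 2 \s_e$ (or, equivalently, sharp estimates on the probability of two BBM particles simultaneously reaching $\sqrt 2 \s_e t$) shows that for any fixed $K>0$, $\P(\L^{(i)}([-K,0))\neq 0)\to 0$ as $\s_e\uparrow\infty$, so $\L^{(i)}\Rightarrow \d_0$ vaguely.

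To conclude, let $\vp\in C_c(\R)$ with $\vp\geq 0$ and $\supp\vp\subset[-M,M]$. Conditioning on $Y_{\s_b}$ and using the Poisson-Cox cluster structure of $\EE_{\s_b,\s_e}$,
\[\E\bigl[\eee^{-\EE_{\s_b,\s_e}(\vp)}\,\big|\,Y_{\s_b}\bigr]=\exp\Bigl(-\wt C(\s_e)Y_{\s_b}\int_\R\bigl(1-\E_\L\bigl[\eee^{-\sum_j\vp(x+\s_e\L_j)}\bigr]\bigr)\eee^{-\sqrt 2 x}\,dx\Bigr),\]
where $\L$ inside denotes a generic copy of $\L^{(i)}$. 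By (b), with probability tending to one all atoms of $\L$ other than that at $0$ are shifted by $y\mapsto \s_e y$ outside $\supp\vp$ for any fixed $x$, so the inner expectation converges pointwise to $\eee^{-\vp(x)}$. Since $\L_j\leq 0$ and $\supp\vp\subset[-M,M]$, the integrand vanishes for $x<-M$; combined with boundedness of $\wt C(\s_e)$ and the exponential weight $\eee^{-\sqrt 2 x}$, dominated convergence yields the Laplace functional of $\EE_{\s_b,\infty}$. The main obstacle is (b): rigorously controlling the cluster under the extreme conditioning, whose probability is $\approx \eee^{-(\s_e^2-1)t}$, requires sharp tail estimates, most naturally via the high-velocity behaviour of the F-KPP equation or a careful Girsanov change of measure combined with the path-localisation of Proposition \thv(prop.loc.1).
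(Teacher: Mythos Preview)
Your overall plan---show the cluster collapses and the constant converges, then pass to the limit in the Laplace functional---is sound and matches the paper's structure. But the execution differs in two notable ways, and your part (a) has a gap.

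\textbf{On (a).} The paper does \emph{not} analyse the F-KPP tail directly. Instead it first proves (b), and then obtains (a) as an immediate corollary via the Chauvin--Rouault identity
\[
\frac{1}{\sqrt{4\pi}\,\wt C(\s_e)}=\lim_{s\uparrow\infty}\frac{\E\bigl[\sum_k \1_{\bar x_k(s)>\sqrt 2\s_e s}\bigr]}{\P\bigl(\max_k \bar x_k(s)>\sqrt 2\s_e s\bigr)}=\E\bigl[\L_{\s_e}((-E,0])\bigr],
\]
where $E$ is an independent exponential with rate $\sqrt 2\s_e$. Once $\L_{\s_e}\Rightarrow\d_0$, the right-hand side tends to $1$ and $\wt C(\s_e)\to 1/\sqrt{4\pi}$ drops out for free. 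Your F-KPP linearisation is plausible, but ``a maximum principle bound on $u-v$'' only gives $u\leq v$ (the nonlinearity is a sink); to get $\wt C(\s_e)\to 1/\sqrt{4\pi}$ you also need a matching lower bound $u\geq v(1-o(1))$ at level $\sqrt 2\s_e t$, which is a separate (second-moment or Paley--Zygmund) argument that you have not supplied. The paper's route avoids this entirely.

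\textbf{On (b).} Both you and the paper argue via a spine picture, but the paper makes this rigorous through the explicit Palm-measure description of Chauvin--Rouault--Wakolbinger: condition on a particle at height $\sqrt 2\s_e t+y$, realise the spine as a Brownian bridge, attach independent BBMs at Poisson-rate-$2$ times along it, localise the bridge in a tube of width $(\s_e s)^\g$, and then bound by a first moment the probability that any offspring BBM reaches within $R$ of the tip. The resulting integral is shown to vanish as $\s_e\uparrow\infty$. Your Girsanov/drift heuristic is morally the same, but you correctly flag that turning it into an estimate under a conditioning of probability $\eee^{-(\s_e^2-1)t}$ is the crux; the Palm decomposition is precisely the tool that handles this cleanly.

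In short: your Laplace-functional wrap-up is fine, and your (b) is on the right track but needs the Palm machinery (or an equivalent many-to-one with spine) to be rigorous. For (a), either supply the missing lower bound on $u$, or---better---follow the paper and deduce it from (b) via the Chauvin--Rouault relation.
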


\begin{proof}
 The proof of Lemma \thv(lem.infinity) is based on a result concerning the cluster processes
 $\L^{(i)}$. 
 We write $\L_{\s_e}$ for a single copy of these processes and add the subscript to make the dependence on the parameter $\s_e$ explicit. 
 We recall from \cite{BovHar13} that the process $\L_{\s_e}$ is constructed as follows. 
Define the processes 
 $\ov{\EE}_{\s_e}$  as the  limits of the point processes 
 \be\Eq(lem33.1)
 \ov{\EE}_{\s_e}^t\equiv 
 \sum_{k=1}^{n(t)} 
 \d_{x_{k}(t)-\sqrt 2\s_e t}
 \ee 
 where $x$ is standard BBM at time $t$ conditioned on the event 
 $\{\max_{k\leq n(t)} x_k(t)>\sqrt 2\s_e t\}$. 
 We  show here that, as $\s_e$ tends to infinity, the processes $\ov{\EE}_{\s_e}$
 converge to a point process consisting of a single atom at $0$.  More precisely, we 
 show that 
\be\Eq(palm.1)
\lim_{\s_e\uparrow\infty}\lim_{t\uparrow \infty}\P\left(\ov\EE^t_{\s_e}([-R,\infty))>1\big|
\max_{k\leq n(t)} x_k(t)>\sqrt 2\s_e t\right)=0.
\ee
Now, 
\bea\Eq(palm.2)
&&\P\left(\ov\EE^t_{\s_e}([-R,\infty))>1\big|
\max_{k\leq n(t)} x_k(t)>\sqrt 2\s_e t\right)\\\nonumber
&&\leq 
\P\left(\supp\ov\EE^t_{\s_e} \cap [0,\infty)\neq \emptyset  \land \ov\EE^t_{\s_e}([-R,\infty))>1\big|
\max_{k\leq n(t)} x_k(t)>\sqrt 2\s_e t\right)\\\nonumber
&&\leq \int_0^\infty \P\left(\supp\ov\EE^t_{\s_e} \cap dy\neq \emptyset \land 
\ov\EE^t_{\s_e}([-R,\infty))>1\big|
\max_{k\leq n(t)} x_k(t)>\sqrt 2\s_e t\right)\\\nonumber
&&= \int_0^\infty \P\left(\supp\ov\EE^t_{\s_e} \cap dy\neq \emptyset
\big| \max_{k\leq n(t)} x_k(t)>\sqrt 2\s_e t \right)\\\nonumber
&&\qquad\times\P\left( \ov\EE^t_{\s_e}([-R,\infty))>1\big|\supp\ov\EE^t_{\s_e} \cap dy\neq \emptyset\right).
\eea
But $\P\left( \cdot \big|\supp\ov\EE^t_{\s_e} \cap dy\neq \emptyset\right)\equiv
 P_{t,y+\sqrt 2\s_e}(\cdot)$ is the \emph{Palm measure} on BBM, i.e. the conditional law 
 of BBM given that there  is a particle at time $t$ in  $dy$ 
 (see Kallenberg \cite[Theorem 12.8]{Kallen}. 
 Chauvin, Rouault, and Wakolbinger 
 \cite[Theorem 2]{ChRoWa91} describe the tree under the   Palm measure $P_{t,z}$ as 
 follows.
 Pick one particle at time $t$ at the location 
 $z$. Then pick a  
 \emph{spine}, $Y$,  which is a Brownian bridge from $0$ to $z$ in time $t$. 
 Next pick a Poisson point process $\pi$  on $[0, t]$ with intensity  $2$.
 For each point $p\in \pi$ start a random number 
  $\nu_p$ of independent branching Brownian motions $(\BB^{Y(p),i},i\leq \nu_p)$ 
  starting at $Y(p)$.
  The law of $\nu$ is given by the size biased distribution, $\P(\nu_p=k-1)\sim \frac{kp_k}
  {2}$. See Figure \ref{figure.1}.
 Now let $z= \sqrt 2\s_e t+y $ for $y\geq 0$. Under  the Palm measure, the point process 
$ \ov{\EE}_{\s_e}(t)$ then takes the form
  \be
  \ov{\EE}_{\s_e}(t) \laweq \d_y+\sum_{p\in\pi,i<\nu_p}
 \sum_{j=1}^{n_{Y(p),i}(p)}\d_{\BB^{Y(p),i}_j(t-p)-\sqrt 2\s_et}.
 \ee
 \begin{figure}
 \flushleft \hspace{2cm}
\subfigure  {  \includegraphics[width=0.3\textwidth]{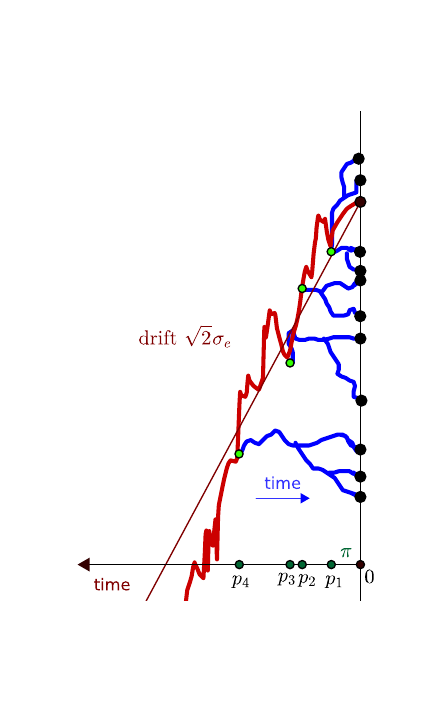}}\hfill
\subfigure  { \hspace{-7cm}\includegraphics[width=0.25\textwidth ]{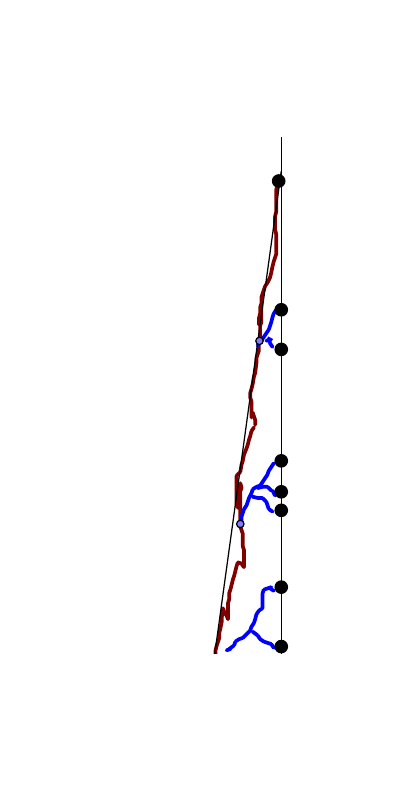}}
\caption{The cluster process seen from infinity for $\s_e$ small (left) and $\s_e$ very large (right)}
\label{figure.1}
\end{figure}
  Since, for $1>\g>1/2$,
 \be\Eq(infinity.8)
\lim_{\s_e\uparrow \infty}\lim_{t\uparrow\infty}\P\left(\forall s\geq \s_e^{-1/2} :Y(t-s)-y+\sqrt{2}\s_e
 s\in [-(\s_e s)^\g, 
(\s_e s)^\g] \right)=1,
 \ee
 if we define the set 
 \be\Eq(infinity.8.1)
 \GG^t_{\s_e}\equiv\left\{Y: 
  \forall {t\geq s\geq  \s_e^{-1/2}} :Y(t-s) -y+\sqrt{2}\s_es\in [-(\s_e s)^\g, 
(\s_e s)^\g]\right\},
\ee
 it will suffice to show that,  for all $R\in \R_+$,  
  \be\Eq(infinity.5)
\lim_{\s_e\uparrow\infty} \lim_{t\uparrow \infty}\P\left(\exists {p\in\pi,i<\nu_p,j}:{\BB^{Y(p),i}_j(t-p)\geq  y-R} \land
Y\in\GG^t_{\s_e} \right)=0.
 \ee 
 The probability in \eqv(infinity.5) is  bounded by
 \bea\Eq(infinity.5.1)
  &&\P\left(\exists {p\in\pi,i\leq\nu_p,j}:{\BB^{Y(p),i}_j(t-p)\geq y -R)}\land
Y\in\GG^t_{\s_e}\right) \\\nonumber
 &\leq& \E\left[\int_0^t \sum_{i=1}^{\nu_p}  \1_{\BB^{Y(p),i}_{j}(t-p) >y-R}
 \1_{Y\in\GG_{\s_e}}
 \pi(dp)\right]\\\nonumber
 &\leq& \E\left[\int_0^t   \E\left[\sum_{i=1}^{\nu_p}  
 \1_{\max_j\BB^{Y(p),i}_{j}(t-p) \geq y-R}\1_{Y\in\GG^t_{\s_e}}
 \big|\FF^\pi\right]\pi(dp)\right]\\\nonumber  
 &\leq& \int_0^t 2K  \P\left(\max_j\BB^{Y(t-s)}_j \geq y-R\land
Y\in\GG^t_{\s_e}\right)ds.
 \eea
 Here we used the independence of the offspring BBM and that the conditional 
 probability  given the $\s$-algebra  $\FF^\pi$ generated by the Poisson process $\pi$
 appearing in the integral over $\pi$ depends only on $p$. 
 For the integral over $s$ up to $1/\s_\e^{1/2}$, we just bound the integrand by $2K$.
 For larger values, we use the localisation provided by the condition that 
 $Y\in \GG_{\s_e}$, to get that the right hand side of \eqv(infinity.5.1) is not larger than
 \bea\Eq(infinity.6)
  2K\int_0^{ \s_e^{-1/2}} ds
  + 2K\int_{\s_e^{-1/2}}^{t}\eee^s\P( B(s)>-R+\sqrt2 \s_es - (\s_es)^\g)ds.
 \eea
 \eqv(infinity.6) is by \eqv(gaussian) bounded from above by
 \be\Eq(infinity.7)
  2K\s_e^{-1/2} 
 + 2K\int_{\s_e^{-1/2}}^{\infty}\eee^{(1-\s_e^2)s+\sqrt 2\s_e(R+(\s_e s)^{\g})}ds.
 \ee
 From this it follows that \eqv(infinity.7) (which does no longer depend on $t$) converges to zero, as  $\s_e\uparrow\infty$, for any $R\in \R$. 
 Hence we see that 
 \be\Eq(palm.101)
\P\left(  \ov\EE^t_{\s_e}([-R,\infty))>1 \big|\supp\ov\EE^t_{\s_e} \cap dy\neq \emptyset\right)\downarrow 0,
\ee
uniformly in $y\geq 0$, as
 $ t $ and then $\s_e$ tend to infinity. Next,  
 \bea\Eq(palm.102)
&& \int_0^\infty \P\left(\supp\ov\EE^t_{\s_e} \cap dy\neq \emptyset\big|\max_{k\leq n(t)} x_k(t)>\sqrt 2\s_e t\right)\\\nonumber
&& \leq
  \int_0^\infty \P\left(\max_{k\leq n(t)} x_k(t)\geq \sqrt 2\s_e t+y\big|\max_{k\leq n(t)} x_k(t)>\sqrt 2\s_e t\right).
  \eea
But by Proposition 7.5 in \cite{BovHar13} the probability in the integrand converges to 
$\exp(-\sqrt 2\s_e y)$, as $t\uparrow \infty$.
 It follows from the proof that this convergence is unifomr in $y$, and hence by dominated convergence,  
the right-hand side of \eqv(palm.102) is finite. Therefore, \eqv(palm.1) holds. As  a consequence,  $\L_{\s_e}$ converges to $\d_0$. 
 
 It remains to show that the intensity of the Poisson process converges as claimed. 
  Theorems 1 and  2 of \cite{chauvin88} relate the constant $\wt C(\s_e)$ defined by
  \eqv(constant.1) to the intensity of the shifted BBM conditioned to exceed the level
  $\sqrt 2 \s_et$ as follows:
  \bea\Eq(cr.1)
 \frac 1{ \sqrt {4\pi} \wt C(\s_e)}&=&\lim_{s\uparrow\infty} \frac{\E\left[ \sum_{k} \1_{\bar 
 x_k(s) >\sqrt 2 \s_es}\right]}{\P\left( \max_k \bar x_k(s)> \sqrt 2\s_e s\right)
 }\\\nonumber
 &=&\lim_{s\uparrow\infty} 
 {\E\left[ \sum_{k} \1_{\bar x_k(s)-\max_i \bar x_i(s) >\sqrt 2 \s_es-\max_i\bar x_i(s)}\big|
 \max_k \bar x_k(s)> \sqrt 2\s_e s\right]}\\\nonumber
 &=& \L_{\s_e}((-E,0]),
 \eea
 where, by Theorem 7.5 in \cite{BovHar13}, $E$ is a exponentially distributed random 
 variable with parameter $\sqrt 2\s_e$, independent of $\L_{\s_e}$.  
 As we have just shown  that $\L_{\s_e} \rightarrow \d_0$, it follows that the right-hand 
 side tends to one, as $\s_e\uparrow \infty$, and hence
  $\wt C(\s_e)\uparrow 1/\sqrt {4\pi}$.
   Hence the intensity measure of the PPP appearing in $\EE_{\s_b,\s_e}$ converges to 
   the desired intensity measure $\frac{1}{\sqrt{4\pi}}Y_{\s_b}\eee^{-\sqrt{2}x}dx$.\\
\end{proof}
This proves Assertion (iii) of Theorem \thv(maintheo).
\end{proof}

\section{Proof of Proposition \thv(prop.number)}

We prove Proposition \thv(prop.number) via convergence of Laplace transforms.
For $u_1,\dots,u_k\in \R, k\in\N$, define the Laplace transform of $\{\NN_{u_1}(t),\dots,\NN_{u_k}(t)\}$,
\be\Eq(comp.9)
\LL_{u_1,\dots,u_k}(t,c)= \E\left(\exp\left(-\sum_{l=1}^k c_l \NN_{u_l}(t)\right)\right),\quad c=(c_1,\dots,c_k)^{t}\in \R_+^k,
\ee
and analogously  the Laplace transform 
$\LL_{u_1,\dots,u_k}(c)$ of $\{\NN_{u_1},\dots,\NN_{u_k}\}$. Proposition \thv(prop.number)  is then a consequence of the next proposition.

\begin{proposition}\TH(prop.Laplace) For any $k\in \N$, $u_1,\dots,u_k\in\R$ and $c_1,\dots,c_k\in\R_+$
\be\Eq(comp.10)
\lim_{t\to\infty}\LL_{u_1,\dots,u_k}(t,c)=\LL_{u_1,\dots,u_k}(c).
\ee
 \end{proposition}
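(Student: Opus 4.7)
The plan is to bracket $\LL_{u_1,\dots,u_k}(t,c)$ between the Laplace transforms of two auxiliary two-speed BBMs and invoke the convergence for two-speed BBM established in \cite{BovHar13}. For each small $\d>0$ choose piecewise-linear covariances $\overline A_\d$ and $\underline A_\d$, both of two-speed type, whose slopes at $0$ and at $1$ equal $\s_b^2$ and $\s_e^2$ respectively, and which satisfy
\be
\underline A_\d(x)\leq A(x)\leq \overline A_\d(x)\qquad\forall\,x\in[0,\d]\cup [1-\d,1],
\ee
as is possible by (A2)-(A3); see Figure \ref{figA}. Writing $\LL^{\overline A_\d},\LL^{\underline A_\d}$ for the corresponding Laplace transforms, it suffices to show
\be\Eq(plan.1)
\LL^{\overline A_\d}_{u_1,\dots,u_k}(t,c)- o_\d(1)\leq \LL_{u_1,\dots,u_k}(t,c)\leq \LL^{\underline A_\d}_{u_1,\dots,u_k}(t,c)+o_\d(1),
\ee
in the limit $t\uparrow\infty$ followed by $\d\downarrow 0$, because the main result of \cite{BovHar13} identifies the common $t\uparrow\infty$ limit of both bracketing terms as $\LL_{u_1,\dots,u_k}(c)$: the limit depends only on the slopes of the covariance at the endpoints and these slopes were matched by construction.

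The bracketing \eqv(plan.1) will be established by a Gaussian interpolation argument combined with path localisation. Introduce the smoothed Laplace functional $F_c(y)=\exp(-\sum_l c_l\1_{y>u_l})$ (approximated by a smooth mollification around each $u_l$), couple $x$ with, say, $\overline x_\d$ on the same Galton-Watson tree but with independent Gaussian increments, and form the interpolating field $z^h=\sqrt h\,x+\sqrt{1-h}\,\overline x_\d$ for $h\in[0,1]$. On the event $\GG_t$ provided by Proposition \thv(prop.loc.1), which holds with probability arbitrarily close to $1$ simultaneously for all $h\in[0,1]$ once $r$ is large, a Gaussian integration-by-parts formula gives
\be\Eq(plan.2)
\frac{d}{dh}\E\Bigl[\1_{\GG_t}\prod_{i\leq n(t)}F_c(z^h_i(t)-\tilde m(t))\Bigr]=\tfrac 12\sum_{i\neq j}\E\Bigl[(A-\overline A_\d)(d(i,j)/t)\,H_{ij}(z^h)\Bigr],
\ee
where $H_{ij}$ is an explicit second-derivative kernel. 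Splitting the pair sum according to whether $d(i,j)/t$ lies in $[0,\d]$, $[\d,1-\d]$, or $[1-\d,1]$ produces three contributions. On the two outer intervals the sign of $A-\overline A_\d$ is fixed by construction, so these contributions have the definite sign needed for \eqv(plan.1), up to errors controlled by the bounded second derivatives in (A2)-(A3) and by the mollification widths, both of which vanish in the desired order of limits. On the middle interval $[\d,1-\d]$ the sign is not controlled, but Assumption (A1) and the construction give a uniform strict gap $\max\{A(u),\overline A_\d(u)\}<u$, so that the Brownian-bridge tube $\TT_{t,I_r,\Sigma^2_h}^\g$ forces two particles with such an MRCA to positions whose conditional probability of ever reaching $\tilde m(t)$ decays exponentially in $t$ at a rate strictly beating the $\eee^{2t}$ pair count; this middle contribution is therefore $o(1)$.

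The principal obstacle is making the integration-by-parts step rigorous, because the functional we are differentiating is neither smooth (owing to the step function in $F_c$) nor monotone in the covariance (owing to the indicator $\1_{\GG_t}$), so classical Kahane or Slepian inequalities do not apply off the shelf. The remedy is to mollify both $F_c$ and $\1_{\GG_t}$, carry out the $h$-differentiation on the mollified object, and show that the residual boundary terms coming from differentiating the mollified localisation indicator are negligible, uniformly in $h\in[0,1]$, by combining the Brownian-bridge deviation estimate of Lemma \thv(BB.basic) applied to each interpolating speed function $\Sigma^2_h=h\Sigma^2_A+(1-h)\Sigma^2_{\overline A_\d}$ with the Gaussian tail bound \eqv(gaussian). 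Once this technical step is carried out, \eqv(plan.1) follows from \eqv(plan.2), and Proposition \thv(prop.Laplace) reduces to the two-speed statement already known from \cite{BovHar13}. The same argument with $\underline A_\d$ in place of $\overline A_\d$ yields the opposite inequality, closing the bracket.
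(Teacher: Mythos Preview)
Your overall architecture---Gaussian interpolation on a common Galton--Watson tree, path localisation via Proposition~\thv(prop.loc.1), and a split of the pair sum according to the location of the most recent common ancestor---matches the paper's proof closely. But there is a genuine gap at the very first step, and it propagates.

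You claim to choose two-speed covariances $\overline A_\d$ and $\underline A_\d$ whose slopes at $0$ and at $1$ are \emph{exactly} $\s_b^2$ and $\s_e^2$ and which nevertheless bracket $A$ on $[0,\d]\cup[1-\d,1]$. This is impossible: a two-speed function is determined by its two slopes and the normalisation $A(0)=0,\,A(1)=1$, so $\overline A_\d$ and $\underline A_\d$ would coincide, and a single line of slope $\s_b^2$ through the origin cannot lie both above and below $A$ near $0$ unless $A$ is already linear there. The paper resolves this by \emph{perturbing} the slopes, taking e.g.\ $\s_b^2\pm\frac{K_1}{n!}(\d^<(t))^{n-1}$ and $\s_e^2\mp\frac{K_2}{2}\d^>(t)$ with $\d^<(t),\d^>(t)\to 0$; see \eqv(comp.3)--\eqv(comp.6). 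These perturbed comparison functions do bracket $A$ near the endpoints (Lemma~\thv(lem.properties)), but now the comparison processes are \emph{$t$-dependent} two-speed BBMs, and the result of \cite{BovHar13} does not apply to them off the shelf. The paper therefore needs the separate Lemma~\thv(lem.limit), which revisits the proof in \cite{BovHar13} and shows that the vanishing perturbations of the slopes do not affect the limiting Laplace transform (the key point being the convergence of the truncated McKean martingale in \eqv(lim.12) to $Y_{\s_b}$, which requires the specific choice $\D(t)=(\d^<(t))^{-1/2}$ in \eqv(important)). Your proposal omits this entire step: you assert that ``the main result of \cite{BovHar13} identifies the common $t\uparrow\infty$ limit'' of the bracketing terms, but that result applies only to fixed-slope two-speed BBM, and even if you intended $\d$-dependent slopes followed by $\d\downarrow 0$, you would still owe a continuity argument for $\wt C(\s_e)$, $Y_{\s_b}$, and the cluster processes $\L^{(i)}$ in the slope parameters.

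A secondary point: your bracketing \eqv(plan.1) has the inequalities reversed relative to the paper's \eqv(comp.8)--\eqv(comp.11). Since larger covariance near the endpoints means more clustering and hence fewer high points, one gets $\LL^A\leq \LL^{\overline A}+o(1)$, not the other way round. This does not affect the final conclusion once both bounds converge to the same limit, but it does mean your sign discussion for the ``outer intervals'' is pointing the wrong way.
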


The proof of Proposition \thv(prop.Laplace) comes in two main steps. First, we prove
the result for the case of two speed BBM. This was done in our previous paper \cite{BovHar13}. In fact, we will need a slight extension of that result where we 
allow a slight dependence of the speeds on $t$. This will be given in the next subsection.

The second step is to show that, under the hypotheses of Theorem \thv(maintheo),
  the Laplace transforms can be well approximated by those of two speed BBM. This 
  uses  the  classical  Gaussian comparison argument in a slightly subtle way.

\subsection{Approximating two speed BBM. The case $A'(1)<\infty$.}

It turns out that  it is enough to compare the process with covariance function $A$ 
with processes whose covariance function is piecewise linear with a  single change
 in slope. We  derive asymptotic  upper and lower bounds by choosing these
in such a way that the covariances near zero and near one are below, 
respectively above,
 that of the original process. 
We define 
\bea\Eq(delta)
\d^<(t)&=&\sup \{x\in[0,1]:A(x)\leq t^{-2/3}\}\nonumber\\
\d^>(t)&=&1-\inf \{x\in[0,1]:A(x)\geq 1-t^{-2/3}\} 
\eea

\begin{figure}[hbtp]
    \subfigure[Case 1) $\s_b=0$ but $\lim_{t\uparrow\infty}\d^<(t)=0$]{\includegraphics[width=0.49\textwidth]{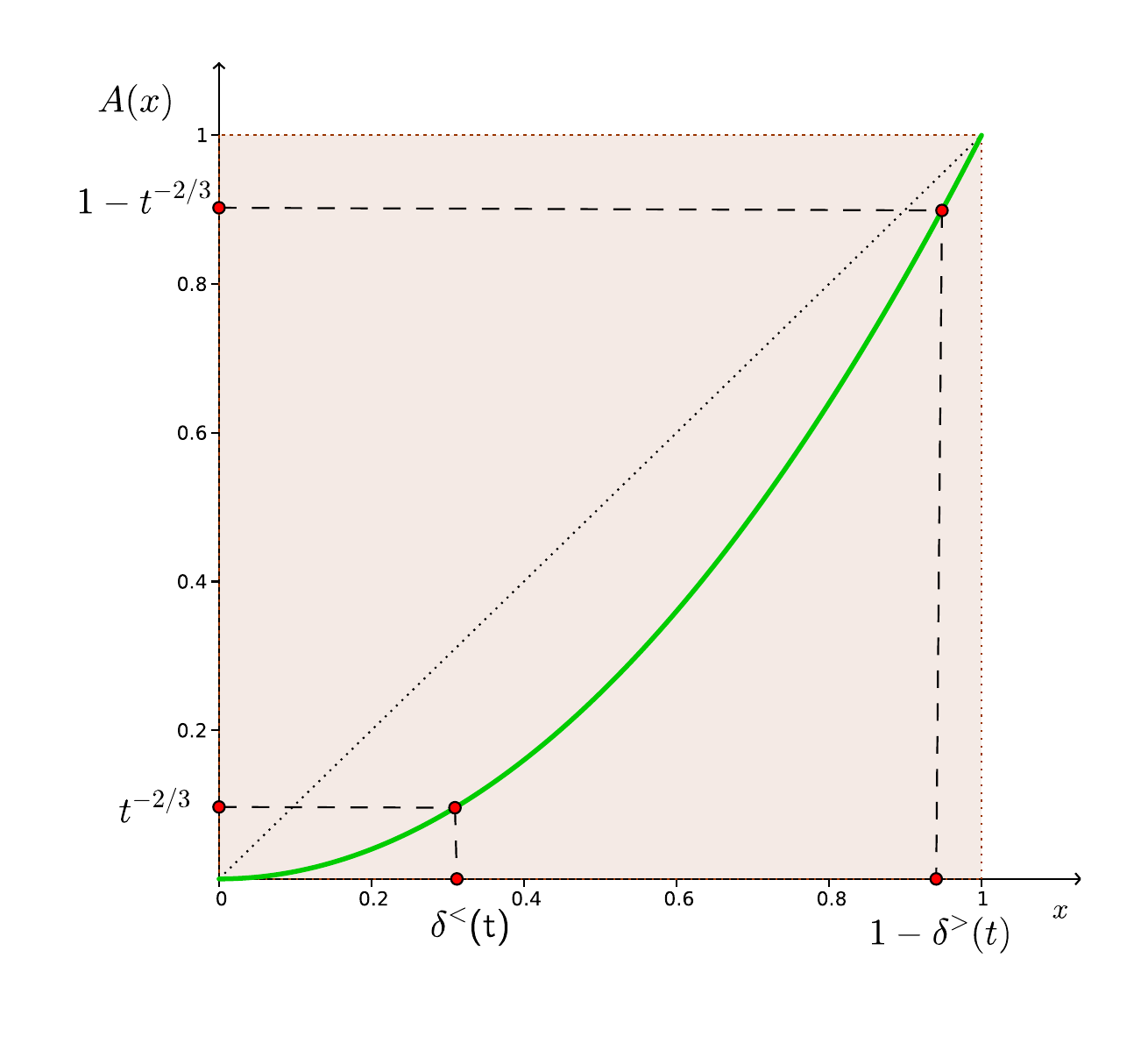} }
    \subfigure[Case 2) $\s_b=0$ but $\lim_{t\uparrow\infty}\d^<(t)=\d^<\neq0$] {\includegraphics[width=0.49\textwidth]{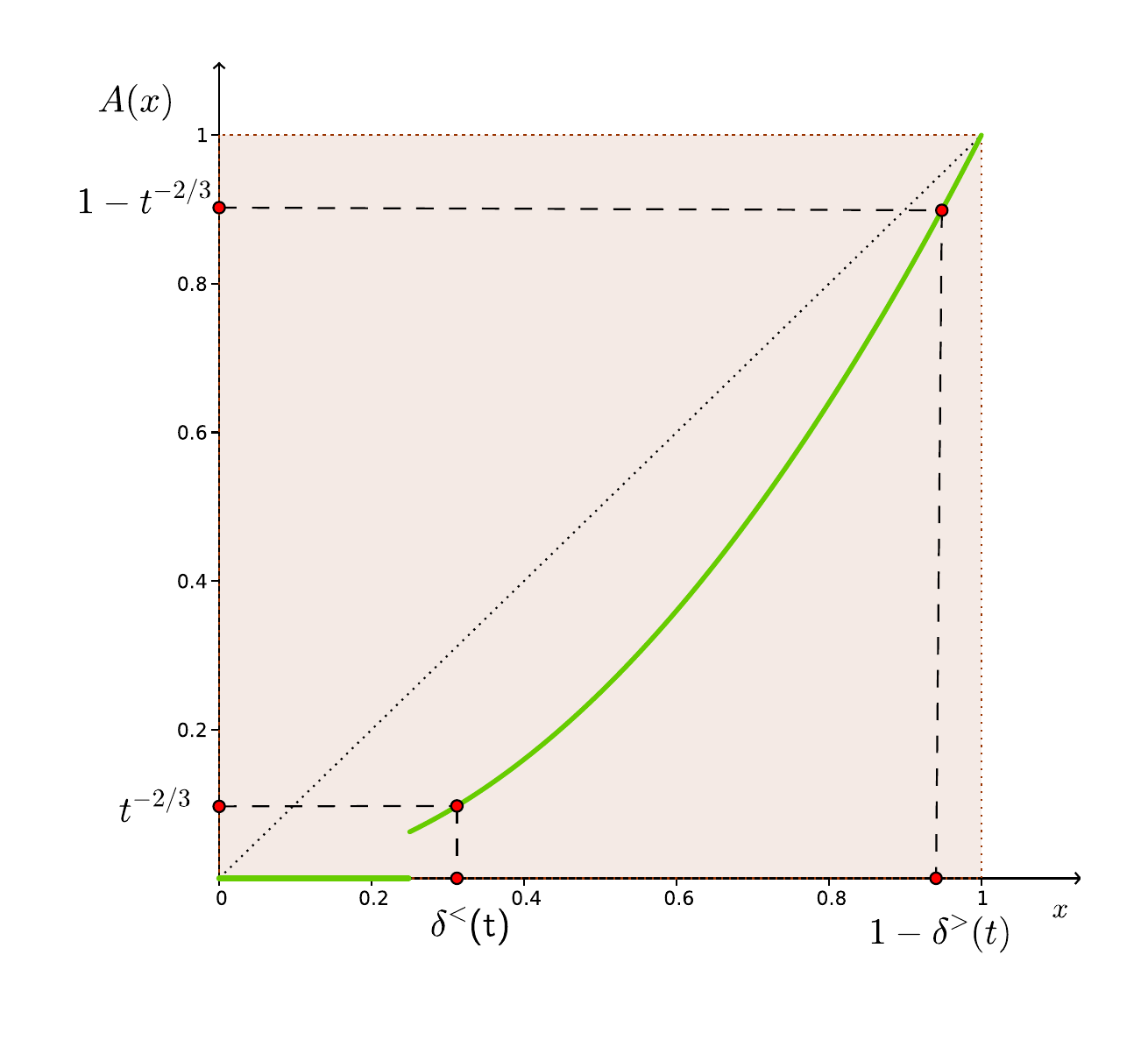}} 
\caption{Different cases for $\d^<(t)$ and $\d^>(t)$.} 
\label{fig.delta}
\end{figure} 
 
By Assumption (A1) it follows that $\lim_{t\uparrow\infty}\d^>(t)=0$.  

\begin{remark}
 If $\lim_{t\uparrow \infty}\d^<(t)=\d^<\neq 0$, then it follows from the
  definition of $\d^<(t)$ that $A(x)=0$ on $[0,\d^<]$.
\end{remark}

In the following formulas, we choose a parameter 
 $n\in\N_{\geq 2}$ as follows. If in Assumption (A2)  the functions  $\overline B, \underline B$ 
 can be chosen such that there exists  
  $m\geq 2$,  such that 
 $\underline{B}^{(k)}(0)=\overline{B}^{(k)}(0)=0$, for all $1\leq k<m$, and in some finite 
 interval $[0,\d_b]$, both $\vert\underline{B}^{(m)}(x)\vert$ and 
 $\vert\overline{B}^{(m)}(x)\vert$ are bounded by some constants $\underline{K}_1$, 
 respectively $\overline{K}_1$, then we choose $n$ as the largest of these integers. 
 Otherwise,  we choose $n=2$. Moreover, let
$\vert \overline{C}''(x)\vert \leq \overline{K}_2$ and $\vert \underline{C}''(x)\vert\leq \underline{K}_2$ for all $x\in[1-\d_e,1]$.
We define
\be\Eq(comp.1)
\overline{\Sigma}^2(s)=t\overline{A} (s/t)
\ee
and
\be\Eq(comp.2)
\underline{\Sigma}^2(s)=t\underline{A}(s/t).
\ee
Here 
\be\Eq(comp.3)
\overline{A}(x)=\begin{cases}
(\s_b^2+\frac{\overline{K}_1}{n!}(\d^<(t))^{n-1})x, & 0\leq x\leq \overline{b},\\
1+(\s_e^2-\frac{\overline{K}_2}{2}\d^>(t))(x-1), & \overline{b}< x\leq 1,               
                       \end{cases}
\ee
with 
\be\Eq(comp.4)
\overline{b}=\frac{1-\s_e^2+\frac{\overline{K}_2}{2}\d^>(t)}{\s_b^2+\frac{\overline{K}_1}{n!}(\d^<(t))^{n-1}-\s_e^2+\frac{\overline{K}_2}{2}\d^>(t)}.
\ee
If $\s^2_e<\infty$, 
\be\Eq(comp.4.1)
\underline{A}(x)=\begin{cases}
\left\{(\s_b^2-\frac{\underline{K}_1}{n!}(\d^<(t))^{n-1})x\right\} \vee 0, & 0\leq x\leq \underline{b},\\
1+(\s_e^2+\frac{\underline{K}_2}{2}\d^>(t))(x-1), & \underline{b}< x\leq 1,               
                       \end{cases}
\ee
with
\be\Eq(comp.6)
\underline{b}=\frac{1-\s_e^2-\frac{\underline{K}_2}{2}\d^>(t)}{\s_b^2-\frac{\underline{K}_1}{n!}(\d^<(t))^{n-1}-\s_e^2-\frac{\underline{K}_2}{2}\d^>(t)}.
\ee

\begin{remark}
If $\s^2_b=0$, $\underline A(x)=0$ for $0\leq x\leq \underline b$. 
If $\lim_{t\uparrow \infty}\d^<(t)=\d^<\neq 0$ (which implies that all derivatives in zero are $0$), we take  
\be\Eq(deriv.n3)
\overline{A}(x)=\begin{cases}
0, & 0\leq x\leq \overline{b},\\
1+(\s_e^2-\frac{\overline{K}_2}{2}\d^>(t))(x-1), & \overline{b}< x\leq 1,               
\end{cases}
\ee
and 
\be\Eq(deriv.n4)
\overline{b}=\frac{1-\s_e^2+\frac{\overline{K}_2}{2}\d^>(t)}{ -\s_e^2+\frac{\overline{K}_2}{2}\d^>(t)}.
\ee
If $A'(1)=\s^2_e=+\infty$,  then $\underline b=1$. And $\overline A\equiv \overline A_\rho$ is defined by 
\be\Eq(comp.3')
\overline{A}(x)=\begin{cases}
(\s_b^2+\frac{\overline{K}_1}{n!}(\d^<(t))^{n-1})x, & 0\leq x\leq \overline{b},\\
1+\rho (x-1), & \overline{b}< x\leq 1,               
                       \end{cases}
\ee
and $\overline{b}\equiv\overline{b}_\rho=\frac{1-\rho}{\s_b^2+\frac{\overline{K}_1}{n!}(\d^<(t))^{n-1}-\rho }$.
\end{remark}

The choice of $\overline{\Sigma}^2$ and $\underline{\Sigma}^2$ is motivated by the following properties.

\begin{lemma}\TH(lem.properties) 
$\overline{A}$ and $\underline{A}$ are piecewise linear, continuous functions with $\overline{A}(0)=\underline{A}(0)=0$ and $\overline{A}(1)=\underline{A}(1)=1$. Moreover,

\begin{itemize}
\item[(i)]
If $\lim_{t\uparrow \infty}\d^<(t)=0$, then,  
for all $s$ with $\Sigma^2(s)\in[0,t^{1/3}]$ and $\Sigma^2(s)\in[t-t^{1/3},t]$, 
\be\Eq(lem.pro1)
\overline{\Sigma}^2(s)\geq \Sigma^2(s)\geq \underline{\Sigma}^2(s).
\ee
\item[(ii)]
 If $\lim_{t\uparrow \infty}\d^<(t)=\d^<>0$, then \eqv(lem.pro1) only holds for all $s$ with $\Sigma^2(s)\in[t-t^{1/3},t]$ while, for $s\in[0,(\d\land \overline b)t)$, 
  \be\Eq(lem.pro1.1)
  \overline{\Sigma}^2(s)=\Sigma^2(s)=\underline{\Sigma}^2(s)=0.
  \ee 
\end{itemize} 
\end{lemma}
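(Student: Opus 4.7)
The structural claims are immediate from the piecewise definitions: piecewise linearity and the boundary values $\overline{A}(0)=\underline{A}(0)=0$, $\overline{A}(1)=\underline{A}(1)=1$ are read off directly, and continuity at the breakpoints holds because the formulas \eqv(comp.4) and \eqv(comp.6) are precisely the abscissae at which the two linear pieces intersect. The real content is the variance comparison.

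For (i), first note that by definition of $\d^<(t)$ and $\d^>(t)$, the condition $\Sigma^2(s)\in[0,t^{1/3}]$ is equivalent to $s/t\le\d^<(t)$, and $\Sigma^2(s)\in[t-t^{1/3},t]$ to $s/t\ge 1-\d^>(t)$. On the first range I would apply Taylor's formula to $\overline{B}$ at $0$, using that its first $n-1$ derivatives vanish (with $n=2$ being the generic case $\s_b^2>0$) and that $|\overline{B}^{(n)}|\le\overline{K}_1$, to obtain
\begin{equation*}
\overline{B}(x)\;\le\; \s_b^2 x+\frac{\overline{K}_1}{n!}x^n\;\le\; \Bigl(\s_b^2+\frac{\overline{K}_1}{n!}(\d^<(t))^{n-1}\Bigr)x\;=\;\overline{A}(x),
\end{equation*}
valid as long as $x\le\overline{b}$. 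Combined with (A2) this yields $A(x)\le\overline{A}(x)$. The branch condition $\d^<(t)\le\overline{b}$ holds for $t$ large since $\overline{b}\to\frac{1-\s_e^2}{\s_b^2-\s_e^2}>0$ while $\d^<(t)\to 0$. The reversed inequality for $\underline{B}$ (requiring the truncation by $0$ in \eqv(comp.4.1) when the linear term turns negative) is entirely analogous.

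The ``near $1$'' bound is handled in the same spirit by expanding $\overline{C},\underline{C}$ at $x=1$. Using $\overline{C}'(1)=\underline{C}'(1)=\s_e^2$, the bounds on the second derivatives, and the pointwise estimate $(1-x)^2\le\d^>(t)(1-x)$ valid on $[1-\d^>(t),1]$, one obtains exactly the slopes appearing in the second branches of $\overline{A}$ and $\underline{A}$; the branch condition $x>\overline{b}\vee\underline{b}$ is automatic for $t$ large. Case (ii) then collapses by the remark preceding the lemma: $\d^<(t)\to\d^<>0$ forces $A\equiv 0$ on $[0,\d^<]$, whence $\s_b^2=0$ and, by the variant \eqv(deriv.n3), $\overline{A}\equiv\underline{A}\equiv 0$ on $[0,\overline{b}\wedge\underline{b}]\supset[0,\d^<]$. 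The case $\s_e^2=+\infty$ is treated by substituting \eqv(comp.3') and invoking the weak form of (A3), with no change in the structure of the argument.

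The main obstacle is not conceptual but purely bookkeeping: one must verify that the Taylor-derived linear bounds reproduce \emph{exactly} the slopes and breakpoints prescribed in \eqv(comp.3)--\eqv(comp.6) (which is precisely why those formulas take the form they do), and separately check the branch conditions $\d^<(t)\le\overline{b}\wedge\underline{b}$ and $1-\d^>(t)\ge\overline{b}\vee\underline{b}$ in each of the cases $\s_b^2=0$ vs.\ $>0$ and $\s_e^2<\infty$ vs.\ $=\infty$.
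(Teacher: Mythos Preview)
Your proposal is correct and follows essentially the same approach as the paper: Taylor expansion of the bounding functions $\overline{B},\underline{B}$ at $0$ and $\overline{C},\underline{C}$ at $1$, combined with Assumptions (A2)--(A3), to recover exactly the linear slopes defining $\overline{A}$ and $\underline{A}$. Your additional explicit check of the branch conditions $\d^<(t)\le\overline{b}\wedge\underline{b}$ and $1-\d^>(t)\ge\overline{b}\vee\underline{b}$ is a useful detail the paper leaves implicit. One small phrasing issue: saying ``its first $n-1$ derivatives vanish'' is literally correct only when $n\ge 3$ (which forces $\s_b^2=0$); in the generic case $n=2$, $\s_b^2>0$, no derivative vanishes and you are simply using the second-order Taylor formula with Lagrange remainder---but your displayed inequality is correct in both cases regardless.
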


\begin{proof} $\overline{A}$ and $\underline{A}$ are obviously piecewise linear. 
The fact that they are continuous is easily verified. 
By definition, $A'(0)=\s_b^2$ and $A'(1)=\s_e^2$.
 For all $s$ such that  $\Sigma^2(s)\in[0,t^{1/3}]$, a $n$th-order Taylor expansion of 
 $\overline{B}$ at $0$ together with the fact that by assumption, for $k<n$, 
 $\overline{B}(0)=\overline{B}^{k}(0)=0$ shows that
 \be\Eq(lem.pro2)
 \Sigma^2(s)\leq \overline{B}(s)=t\left[\overline{B}'(0)\frac{s}{t}+\frac{\overline{B}^{(n)}(\xi)}{n!}\left(\frac{s}{t}\right)^n\right], \quad\mbox{for some }\xi\in(0,s).
 \ee
The reverse inequality holds when $\overline{B}$ is replaced by $\underline{B}$. Eq. \eqv(lem.pro1) follows then from  Assumption (A2).
 Using a second order Taylor expansion of $\overline{C}$ and $\underline{C}$ at $1$, we obtain Eq. \eqv(lem.pro1) for $\Sigma^2(s)\in[t-t^{1/3},t ]$.
Eq. \eqv(lem.pro1.1) holds trivially in the specified interval.
This concludes the proof of the lemma.
\end{proof}

Let $\{\overline{y}_i,i\leq n(t)\}$ be the particles of a BBM with speed function $
\overline{\Sigma}^2$ and let\\ $\{\underline{y}_i,i\leq n(t)\}$ be particles of a BBM with speed 
function $\underline{\Sigma}^2$. 
We want to show that the limiting extremal processes of these processes coincide.
Set
\bea\Eq(comp.7)
\overline{\NN}_{u}(t)\equiv\sum_{i=1}^{n(t)}\1_{\overline{y}_i(t)-\tilde m(t)>u},\\
\underline{\NN}_{u}(t)\equiv\sum_{i=1}^{n(t)}\1_{\underline{y}_i(t)-\tilde m(t)>u}.
\eea
 
\begin{lemma}\TH(lem.limit) 
 For all $u_1,\dots, u_k$ and all $c_1,\dots,c_k\in\R_+$,  the limits
 \be\Eq(lim.1)
 \lim_{t\uparrow \infty}\E\left(\exp\left(-\sum_{l=1}^k c_k \overline{\NN}_{u_l}(t)\right)\right)
  \ee
 and 
 \be\Eq(lim.2)
 \lim_{t\uparrow \infty}\E\left(\exp\left(-\sum_{l=1}^k c_k \underline{\NN}_{u_l}(t)\right)\right)
 \ee
 exist. If $A'(1)<\infty$, then   two limits coincide with $\LL_{u_1,\dots,u_k}(c)$. 
 
 If $A'(1)=\s_e^2=\infty$, then the two limits in \eqv(lim.1) and \eqv(lim.2)  converges to 
 the same limit, as 
  $\rho\uparrow \infty$.
\end{lemma}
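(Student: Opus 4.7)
The plan is to reduce the lemma to Theorem 7.7 of \cite{BovHar13}, which establishes Laplace-transform convergence for two-speed BBM with fixed slopes, and then to extend that result to the present setting where the slopes of $\overline A$ and $\underline A$ depend mildly on $t$.

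First, I would observe that $\overline A$ and $\underline A$ are themselves two-speed covariance functions of the type treated in \cite{BovHar13}: the initial slope of $\overline A$ is $\overline\s_b^2(t)=\s_b^2+\tfrac{\overline K_1}{n!}(\d^<(t))^{n-1}$ and its final slope is $\overline\s_e^2(t)=\s_e^2-\tfrac{\overline K_2}{2}\d^>(t)$ (with symmetric expressions for $\underline A$), and the change-point is chosen so that $\overline A(1)=1$. Since $\d^>(t)\to 0$ by (A1) and $(\d^<(t))^{n-1}$ is bounded, both slopes converge to $\s_b^2$ and $\s_e^2$ respectively, and the change-points $\overline b,\underline b$ converge to $b=(1-\s_e^2)/(\s_b^2-\s_e^2)$.

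The heart of the argument is a stability extension of Theorem 7.7: the conclusion persists when $(\s_1^2,\s_2^2,b)$ is replaced by a $t$-dependent sequence converging to a limiting configuration. I would revisit the two-phase structure of that proof. Up to time $\overline b\,t$, the relevant object is a McKean-type martingale indexed by $\overline\s_b(t)$; using its explicit form \eqv(mckean.0) together with $L^p$-boundedness, a direct $L^2$ comparison gives $Y_{\overline\s_b(t)}(\overline b t)\to Y_{\s_b}$ in law. After time $\overline b\,t$, Bramson's travelling-wave constant $\wt C(\overline\s_e(t))$ and the cluster laws $\L^{(i)}_{\overline\s_e(t)}$ enter; continuity of these objects in $\s_e$ follows from uniform F--KPP estimates applied to the tail expansion \eqv(constant.1). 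Combining the two phases identifies $\lim_{t\to\infty}\LL_{\overline A}(t,c)$ with $\LL_{u_1,\dots,u_k}(c)$, and the same argument handles $\underline A$.

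For the case $A'(1)=\s_e^2=\infty$, $\underline A$ degenerates but $\overline A\equiv\overline A_\rho$ remains a genuine two-speed function for each finite $\rho$. The previous step then yields $\lim_{t\to\infty}\LL_{\overline A_\rho}(t,c)$ equal to the Laplace transform of $\EE_{\s_b,\sqrt\rho}$. Letting $\rho\uparrow\infty$ and invoking Lemma \thv(lem.infinity), this pointwise limit converges to the Laplace transform of $\EE_{\s_b,\infty}$; a dominated convergence argument, using the tightness estimate of Proposition \thv(prop.tight) uniformly in $\rho$, transfers this to the statement of the lemma. The analogue for $\underline A$ follows by coupling $\underline A$ to a suitable $\overline A_\rho$ with $\rho$ large.

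The principal obstacle is the continuity-in-slope assertion underlying the second step: proving that the constant $\wt C(\s_e)$, the cluster laws $\L^{(i)}_{\s_e}$, and the McKean martingale $Y_{\s_b}$ depend sufficiently continuously on their parameters to accommodate slopes that drift with $t$. Both ingredients are intuitively clear from the representations in \cite{BovHar13} and the Feynman--Kac/F--KPP analysis of \cite{B_C}, but extracting the uniform-in-slope quantitative bounds---especially in \eqv(constant.1)---is where the main technical work will lie.
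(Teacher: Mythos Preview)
Your high-level strategy---reduce to the two-speed result of \cite{BovHar13} and then argue that the $t$-dependent slopes $\overline\s_b^2(t)\to\s_b^2$, $\overline\s_e^2(t)\to\s_e^2$ do not affect the limit---is exactly what the paper does. But the implementation you describe diverges from the paper's in a way that hides the one genuine difficulty.

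You propose to prove an abstract ``continuity in the slopes'' statement: continuity of $Y_{\s_b}$, of $\wt C(\s_e)$, and of the cluster laws $\L_{\s_e}$. The paper does \emph{not} do this. Instead it re-enters the proof of Theorem~5.1/7.7 in \cite{BovHar13} and observes that nothing changes until the analogue of Eq.~(5.28) there, where the McKean-type sum appears. At that single step, the paper handles the $t$-dependent slope by replacing the intermediate time (which in \cite{BovHar13} is $\overline b\sqrt t$) by the adapted choice $\D(t)=(\d^<(t))^{-1/2}$. The point is that the extra factor in the exponent of each summand is $\exp\bigl(O((\d^<(t))^{n-1}\D(t))\bigr)$; with this $\D(t)$ one gets $(\d^<(t))^{n-3/2}\to0$ for every $n\ge2$, so the perturbed McKean sum coincides with the unperturbed one up to a multiplicative $(1+o(1))$, and then Lemma~4.3 of \cite{BovHar13} applies directly. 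Your formulation ``$Y_{\overline\s_b(t)}(\overline b\,t)\to Y_{\s_b}$ via $L^2$ comparison'' skips this: with the naive intermediate time of order $t$ (or even $\sqrt t$ in some regimes with $\s_b=0$), the perturbation $(\d^<(t))^{n-1}\cdot(\text{time})$ need not vanish, and the comparison fails.

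The payoff of the paper's route is that it never needs the continuity of $\wt C(\s_e)$ or of the cluster laws that you flag as the principal obstacle: once the McKean sum is controlled, the remaining prefactors in the analogue of Eq.~(5.28) converge trivially because $\d^>(t)\to0$, and the limit is identified directly. So the ``main technical work'' you anticipate is in fact avoidable; the missing idea is the adapted choice of $\D(t)$. For the case $A'(1)=\infty$ your use of Lemma~\thv(lem.infinity) matches the paper.
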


\begin{proof}  We first consider the case when 
$A'(1)<\infty$. To prove  Lemma \thv(lem.limit), we show that the extremal processes
\be
\overline{\EE_t}=\sum_{i=1}^{n(t)}\d_{\overline{y}_i-\tilde m(t)}
\quad\mbox{and}\quad
\underline{\EE_t}=\sum_{i=1}^{n(t)}\d_{\underline{y}_i-\tilde m(t)}
\ee
both converge to $\EE_{\s_b,\s_e}$, that was defined in \eqv(maintheo.2). Note that this implies first convergence of Laplace functionals with 
 functions $\phi$ with compact support, while the   $\NN_u(t)$ have support that is unbounded from above. Convergence for these, however, carries over due to the tightness established in Proposition \thv(prop.tight).

To do so, 
 observe that the slopes at $0$ of $\overline{\Sigma}^2$ and $\underline{\Sigma}^2$ are equal to 
 $\s_b^2$ up to an error of order $ \d^<(t)$. Moreover,
  the slope at $t$ is in both cases, up to an error of order $ \d^>(t)$, equal to  $\s_e^2$. The time of 
  speed change $\overline{b}(t)$, respectively $\underline{b}(t)$, is equal to $\frac{1-\s_e^2}
  {\s_b^2-\s_b^2}$ up to an error of order $\d^>(t)\vee \d^<(t)$.
 For the two-speed BBM with speed
 \be\Eq(lim.10)
 \s^2(s)=\begin{cases}
 \s^2_b, &\,\hbox{for}\,\,0<s\leq \frac{1-\s_e^2}{\s_b^2-\s_e^2} ,\\
 \s^2_e, &\,\hbox{for}\,\,\frac{1-\s_e^2}{\s_b^2-\s_e^2}<s<t,
 \end{cases}
 \ee
 it was shown in \cite{BovHar13} that the maximal displacement is equal to $\tilde m(t)$ and that the extremal process converges to $\EE_{\s_b,\s_e}$ as $t\uparrow \infty$. 
 The method used to show this is to show the convergence of the Laplace functionals,
  $\E(\exp(-\int \phi(x) \EE_t(dx)))$, $\phi\in C_c(\R,\R_+)$.
  The other difference is that the function $A$ we have
 to consider now depend (weakly) on $t$. We need to show that this has no effect. 
 
 Inspecting the proof of the convergence of the Laplace functional, respectively convergence of 
 the maximum in \cite{BovHar13}, one sees that nothing changes (since  we keep $t$ fixed)
  until  Eq. (5.28) in  \cite{BovHar13}.
   There, we then have to show that, for each $y\in\R$, (in the case of $\overline{\Sigma}^2$)
 \be\Eq(lim.11)
 \E\left(\exp\left(-C(a)\left(\sfrac{\s_e^2-\frac{\overline{K}_2}{2}\d^>(t)}{1-[\s_b^2+
 \frac{\overline{K}_1}{2}\d^<(t)]\overline{b}/\sqrt{t}}\right)^{1/2}\eee^{-\sqrt{2}y}
 \overline{Y}_{\s_b,\overline{b}\sqrt t, \g}^B \right)(1+o(1))\right),
 \ee
 converges, as first $t\uparrow\infty$ and then $B\uparrow\infty$, to 
 \be\Eq(5.28.1)
 \E\left(\exp\left(-\s_e C(a)Y_{\s_b} \eee^{-\sqrt 2y}\right)\right),
 \ee
 where $C(a)>0$ is a constant depending on $a=\sqrt 2(\s_e-1)$ 
 (see \eqv(constant.1)), and
 \be\Eq(lim.12)
 \overline{Y}_{\s_b,\overline{b}\sqrt t, \g}^B=\sum_{i=1}^{n(\overline{b}\sqrt t)}\eee^{-(1+\s_b^2+
 \frac{K}{2}\d^<(t))\overline{b}\sqrt t+\sqrt{2 }\overline{y}_i(\overline b\sqrt t)}
 \1_{\overline{y}_i(\overline b\sqrt t)-\sqrt{2}(\s_b^2+\frac{K}{2}\d^<(t))\overline{b}\sqrt t\in[-Bt^{\g/2},Bt^{\g/2}]}.
 \ee
 The main task is to ensure the convergence of $\overline{Y}_{\s_b,\overline{b}\sqrt t, \g}^B $
 to the limit of the corresponding McKean martingale, $Y_{\s_b}$.
 In the case where $\lim_{t\uparrow\infty}\d^<(t) > 0$, this takes the simple form 
 \be\Eq(lim.12.bis)
 \overline{Y}_{0,\overline{b}\sqrt t, \g}^B=\sum_{i=1}^{n(\overline{b}\sqrt t)}\eee^{-\overline{b}\sqrt t},
 \ee
 which converges to an exponential random variable of mean one, as desired.

In the case when  $\lim_{t\uparrow\infty}\d^<(t) = 0$, a further slight modification is necessary.  
Observe that in the proof of Theorem 5.1 
in \cite{BovHar13}, $\overline b\sqrt t$ can be replaced by  any sequence $\D(t)\uparrow \infty$ 
such that $\lim_{t\uparrow\infty}\D(t)/t=0$. Here we adapt $\D(t)$ to the function $\Sigma^2$ and 
choose
 \be\Eq(important)
 \D(t)=\left(\d^<(t)\right)^{-1/2}.
 \ee
 Doing so, we have to show that,  analogously to \eqv(lim.11), the object 
 \be\Eq(lim.11.1)
 \E\left(\exp\left(-C(a)\left(\sfrac{\s_e^2-\frac{\overline{K}_2}{2}\d^>(t)}{1-[\s_b^2+
 \frac{\overline{K}_1}{2}\d^<(t)] \D(t)/t}\right)^{1/2}\eee^{-\sqrt{2}y}
 \overline{Y}_{\s_b,\D(t), \g}^B \right)(1+o(1))\right)
 \ee
 converges to \eqv(5.28.1).
 By our choice of $\D(t)$, 
  \be\Eq(lim.14)
 \left| \eee^{-\frac{\overline{K}_1}{2}\d^<(t)\D(t)}\eee^{\sqrt{2}\frac{\overline{K}_1}{2}\d^<(t)(\D(t)+B\D(t)^\g)}-1\right|\leq  const. \sqrt {\d^<(t)},
 \ee
which tends to zero, as $t\uparrow\infty$. Thus
\be\Eq(pseudo.1)
 \overline{Y}_{\s_b,\D(t), \g}^B =  \wt Y_{\s_b,\D(t),\g}^{B}(1+o(1)),
 \ee
 where
 \be\Eq(lim.13)
 \wt Y_{\s_b,\D(t),\g}^{B}\equiv 
 \sum_{i=1}^{n(\D(t))}\eee^{-(1+\s_b^2)\D(t)+\sqrt{2} 
 \s_b\overline{x}_i(\D(t))}\1_{\s_b\overline{x}_i(\D(t))-\sqrt{2}\s_b^2\D(t)\in[-B\D(t)^{\g},B
 \D(t)^{\g}]}.
 \ee
By Lemma 4.3 in \cite{BovHar13}, it follows that 
 $\wt Y_{\s_b,\D(t),\g}^{B}$ converges in probability and in $L^1$ to the random variable $Y_{\s_b}
 $. Since $\wt Y_{\s_b,\D(t),\g}^{B}\geq 0$ and $  C(a)>0$,
and since 
\be
\lim_{t\uparrow \infty}\left(\sfrac{\s_e^2-\frac{\overline{K}_2}{2}\d^>(t)}{1-[\s_b^2+\frac{\overline{K}_1}{2}\d^<(t)] \D(t)/t}\right)^{1/2} =\s_e,
\ee
 it follows that
  \bea\Eq(lim.16)
&& \lim_{B\uparrow \infty}\liminf_{t\uparrow\infty}\E\left(\exp\left(-  C(a)
\s_e
\eee^{-\sqrt{2}y}\wt{Y}_{\s_b,\D(t), \g}^B \right)(1+o(1))\right)\nonumber\\
&&=\lim_{B\uparrow \infty}\limsup_{t\uparrow\infty}\E\left(\exp\left(- C(a)
\s_e
\eee^{-\sqrt{2}y}\wt{Y}_{\s_b,\D(t), \g}^B \right)(1+o(1))\right)\nonumber\\
&&=\E\left(\exp\left(-\wt C(\s_e)Y_{\s_b}\eee^{-\sqrt 2 y}\right)\right),
 \eea
where $\wt C(\s_e)=\s_e C(a)$. The same arguments work when $\overline{\Sigma}^2$ is replaced by $\underline{\Sigma}^2$. The limit in \eqv(lim.16) coincides with the one obtained in \cite{BovHar13} for the two-speed BBM with speed given in \eqv(lim.10).
The assertion in the  case when $\s_e=\infty$ follows directly from Lemma \thv(lem.infinity).
 \end{proof}

 \subsection{Gaussian comparison}
We distinguish from now on  the expectation with respect to the underlying tree structure and the one with respect to the Brownian movement of the particles. 
\begin{itemize}
\item $ \E_n$: expectation w.r.t. Galton-Watson process.
\item $\E_B$: expectation w.r.t. the Gaussian process conditioned on the $\s$-algebra 
 $\FF_t^{\hbox{\tiny tree}}$ generated by the Galton Watson process.
\end{itemize}
For a given realisation of the Galton-Watson process, we now let $x$, $\bar y$,
and $\underline y$ be three 
independent Gaussian processes whose covariances are given as in \eqv(variance.2)
with $A$ replaced by $\overline A$ in the case of $\bar y$ and $\underline A$ in the 
case
of $\underline y$.

The proof of Proposition \thv(prop.Laplace) is based on the following Lemma that compares the Laplace transform $\LL_{u_1,\dots,u_k}(t,c)$
with the corresponding Laplace transform for the comparison processes.

\begin{lemma}\TH(lem.number)
 For any $k\in \N$, $u_1,\dots,u_k\in\R$ and $c_1,\dots,c_k\in\R_+$ we have 
  \bea\Eq(comp.8)
 \LL_{u_1,\dots,u_k}(t,c)&\leq& \E\left(\exp\left(-\sum_{l=1}^k c_l \overline{\NN}_{u_l}(t)\right)\right)+o(1)\\ \Eq(comp.11)
 \LL_{u_1,\dots,u_k}(t,c)&\geq&\E\left(\exp\left(-\sum_{l=1}^k c_l \underline{\NN}_{u_l}(t)\right)\right)+o(1)
 \eea
\end{lemma}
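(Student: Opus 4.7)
My plan is to prove the upper bound \eqv(comp.8); the lower bound \eqv(comp.11) follows by the same argument with $\overline y$ replaced by $\underline y$ and the direction of the covariance inequality reversed. The backbone is a Kahane-type Gaussian interpolation between $x$ and $\overline y$, built (conditionally on the tree) as independent Gaussian processes on a common Galton--Watson tree, combined with the path localization of Proposition \thv(prop.loc.1).

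First I would regularise: for each $l$, approximate $\1_{z > u_l}$ by a smooth, non-decreasing function $g_l^\varepsilon \uparrow \1_{z > u_l}$ as $\varepsilon \downarrow 0$, and rewrite the integrand in \eqv(comp.8) (up to $o_\varepsilon(1)$) as a product
\be
F^\varepsilon(z(t)) = \prod_{i \leq n(t)} f^\varepsilon(z_i(t) - \tilde m(t)),
\ee
with $f^\varepsilon = \exp(-\sum_l c_l g_l^\varepsilon)$ smooth, non-increasing, valued in $(0,1]$, and $1-f^\varepsilon$ supported in a half-line bounded below. Conditionally on the tree, introduce the interpolating Gaussian process
\be
z_i(s;\lambda) = \sqrt{\lambda}\, x_i(s) + \sqrt{1-\lambda}\, \overline y_i(s), \qquad \lambda \in [0,1],
\ee
whose variance function is $\Sigma^2_\lambda := \lambda \Sigma^2 + (1-\lambda)\overline\Sigma^2$, and set $G(\lambda) := \E_n \E_B[F^\varepsilon(z(\cdot;\lambda))\, \chi_\lambda]$, where $\chi_\lambda$ is a smooth factor approximating the indicator that every particle with $z_i(t;\lambda)$ near $\tilde m(t)$ has path in the tube $\TT_{t,I_r,\Sigma^2_\lambda}^\gamma$. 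Proposition \thv(prop.loc.1), applied to $z(\cdot;\lambda)$, implies that removing $\chi_\lambda$ costs at most $\varepsilon$, uniformly in $\lambda$, provided $r$ is large.

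Next I would apply Kahane's integration-by-parts identity at each $\lambda$, viewing $\{z_i(s;\lambda)\}$ on a sufficiently fine grid of times as a Gaussian vector, to get
\be
G'(\lambda) = \tfrac{1}{2}\sum_{(i,s),(j,s')} \bigl(K^{x}_{(i,s)(j,s')} - K^{\overline y}_{(i,s)(j,s')}\bigr)\, \E_n\E_B\bigl[\partial_{(i,s)(j,s')}(F^\varepsilon \chi_\lambda)\bigr],
\ee
where $K^{x}, K^{\overline y}$ denote the conditional covariance matrices. At the terminal time $s=s'=t$ one has $A(1) = \overline A(1) = 1$, so diagonal contributions $i=j$ cancel; the terms in which a derivative falls on $\chi_\lambda$ can be made $o(1)$ by choosing $\chi_\lambda$ as a smooth function of ratios of the form $z_i(s) - (s/t)z_i(t)$, whose derivatives are supported near the boundary of the tube, where the Gaussian density decays. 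I then split the remaining off-diagonal terminal contributions according to the rescaled splitting time $\tau = d(i,j)/t$. For $\tau \in [0,\delta^<(t)] \cup [1-\delta^>(t),1]$, Lemma \thv(lem.properties) gives $A(\tau) \leq \overline A(\tau)$, hence $K^x_{ij} - K^{\overline y}_{ij} \leq 0$; since $f^\varepsilon$ is non-increasing, $(f^\varepsilon)'(z_i)(f^\varepsilon)'(z_j) \geq 0$ and the product over the remaining particles is non-negative, so these terms contribute non-positively to $G'(\lambda)$.

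The main obstacle is the ``middle'' region $\tau \in (\delta^<(t), 1-\delta^>(t))$, where the sign of $A-\overline A$ is not controlled and one must bound the contribution in absolute value. The key point is that, under $\chi_\lambda$, two particles $i,j$ sharing an ancestor at time $s=d(i,j)$ with $z_i(t), z_j(t) \approx \tilde m(t)$ must satisfy $z_i(s) = z_j(s) \approx (s/t)\tilde m(t) \approx \sqrt 2\, s$, while the variance $\Sigma^2_\lambda(s) = t(\lambda A(s/t) + (1-\lambda)\overline A(s/t))$ is strictly smaller than $s$ throughout the middle region by (A1) and the construction of $\overline A$. A first-moment count of the number of such pairs (the tree has expected $O(\eee^{2t-s})$ pairs splitting at $s$), combined with the Gaussian tail \eqv(gaussian), shows the middle contribution to $G'(\lambda)$ is exponentially small in $t$, uniformly in $\lambda$. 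Integrating $G'$ over $[0,1]$ and letting $t\to\infty$, then $r\to\infty$, then $\varepsilon\downarrow 0$, yields \eqv(comp.8).
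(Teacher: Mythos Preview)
Your overall strategy---Gaussian interpolation between $x$ and $\overline y$ on the common tree, path localisation, a sign argument on the boundary pieces, and a first-moment bound on the middle---is precisely the paper's. But two formulas are wrong, and the paper organises the localisation step differently in a way that bypasses the part you leave vague.

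First, the bridge for the interpolated process $z(\cdot;\lambda)$ with variance function $\Sigma^2_\lambda$ that is independent of the endpoint is $\xi_i^\lambda(s)=z_i(s)-\frac{\Sigma^2_\lambda(s)}{t}z_i(t)$, not $z_i(s)-(s/t)z_i(t)$; with the latter choice the covariance with $z_i(t)$ is $\Sigma^2_\lambda(s)-s\neq 0$ and your claim that ``derivatives falling on $\chi_\lambda$'' produce negligible terms fails. Correspondingly, the tube $\TT^\gamma_{t,I,\Sigma^2_\lambda}$ pins the path at $z_i(s)\approx\sqrt 2\,\Sigma^2_\lambda(s)$, not $\sqrt 2\,s$. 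With the right formula your middle-region count becomes: $O(\eee^{2t-s})$ pairs split at $s$, the common ancestor sits near $\sqrt 2\,\Sigma^2_\lambda(s)$ (cost $\eee^{-\Sigma^2_\lambda(s)}$), each descendant must reach $\tilde m(t)$ (cost $\eee^{-(t-\Sigma^2_\lambda(s))}$ each), and the net contribution is of order $\eee^{-s+\Sigma^2_\lambda(s)}$, which is small on the middle interval because $\Sigma^2_\lambda(s)<s$ there. This is exactly what the paper proves in Lemmas \thv(lem.t1) and \thv(lem.t3).

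Second, and more structurally, the paper does not multiply the functional by a global tube-factor $\chi_\lambda$ before interpolating. Instead it differentiates $\E_B[f_{t,\kappa}(x^h(t))]$ in $h$ first, obtaining a sum over $i$ of terms $\partial_i f\cdot(\frac{x_i(t)}{\sqrt h}-\frac{\overline y_i(t)}{\sqrt{1-h}})$, and only then inserts, into the $i$-th summand alone, the indicator $\1_{x_i^h\in\TT^\gamma_{t,\bar I,\Sigma^2_h}}$. The complementary piece is controlled directly (Lemma \thv(lem.zero)). When Gaussian integration by parts is applied to the remaining term, the indicator depends solely on the bridge $\xi_i^h$, and the identity $\E_B[x_i(t)\xi_i^h(s)]=0$ (Eq.~\eqv(comp.30')) annihilates every cross-term coming from the discretised indicator; one is left cleanly with the off-diagonal terminal-time second derivatives, decorated by a single-particle tube condition, i.e.\ exactly $(\overline{T1})+(\overline{T2})$. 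Your global $\chi_\lambda$ would instead generate mixed terms $\partial_aF^\varepsilon\cdot\partial_b\chi_\lambda$ and $F^\varepsilon\,\partial_a\partial_b\chi_\lambda$ over all (particle, time) index pairs, and controlling these uniformly is not free; the paper's per-summand insertion avoids the issue entirely.
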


 \begin{proof}
 The proofs of \eqv(comp.8) and \eqv(comp.11) are very similar. Hence we focus on proving 
 \eqv(comp.8). We will, however, indicate what has to be changed when proving the lower bound 
 as we go along.
 For simplicity  all overlined names depend on $\overline{\Sigma}^2$. Corresponding quantities 
 where $\overline{\Sigma}^2$ is replaced by $\underline{\Sigma}^2$ are underlined.
 
 To use Gaussian comparison methods, we first replace the functions $\NN_u(t), \overline 
 \NN_u(t)$ by smooth approximants: 
 \be\Eq(noweak.1)
 \chi^\k(x)\equiv 
 \frac{1}{\sqrt {2\pi\k^2}}\int_{-
 \infty}^{x} \eee^{-z^2/2\k^2}dz,
 \ee
 \be\Eq(noweak.2)
 \NN^\k_u(t)\equiv \sum_{i=1}^{n(t)} \chi^\k(x_i(t)-\tilde m(t)-u),
 \ee
 and 
 \be\Eq(noweak.3)
\overline{\NN}^\k_u(t)\equiv \sum_{i=1}^{n(t)} \chi^\k(\bar y_i(t)-\tilde m(t)-u).
 \ee
 Note that, as $\k\downarrow 0$,
 \be\Eq(noweak.4)
 \chi^\k(x)\rightarrow \1_{x>0},\quad \NN_u^\k(t)\rightarrow \NN_u(t),\quad  
 \overline{\NN}_u^\k(t)\rightarrow \overline{\NN}_u(t).
 \ee
 In order to prove \eqv(comp.8), we show that for all $\k>0$,
 \be\Eq(noweak.0)
 \E_B\left(\exp\left(-\sum_{l=1}^k c_l {\NN}^\k_{u_l}(t)\right)\right)\leq 
 \E_B\left(\exp\left(-\sum_{l=1}^k c_l \overline{\NN}^\k_{u_l}(t)\right)\right)+R(t),
 \ee
 where $R(t)$ is independent of $\k$ and $\lim_{t\uparrow \infty} 
 \E R(t)=0$.
 
 From now on we work conditional on the $\s$-algebra generated by the 
 Galton-Watson tree. 
  We introduce the family of functions $f_{t,\k}:\R^{n(t)}\rightarrow \R$ by 
 \be\Eq(comp.14)
 f_{t,\k} (x)\equiv
 f_{t,\k}(x_1,\dots,x_{n(t)})
 \equiv \exp\left(-\sum_{i=1}^{n(t)} \sum_{l=1}^kc_l  \chi^\k(x_i-\tilde m(t)-u_l)\right).
 \ee
We want to control 
 \bea\Eq(comp.12)
 &&\E_B\left(\exp\left(-\sum_{l=1}^k c_l  \NN^\k_{u_l}(t)\right)\right)-\E_B\left(\exp\left(-\sum_{l=1}^k c_l \overline{\NN}^\k_{u_l}(t)\right)\right)\nonumber\\
 &&=\E_B\left(f_{t,\k}(x_1(t),\dots,x_{n(t)}(t))\right)-\E_B\left(f_{t,\k}(\overline{y}_1(t),\dots,\overline{y}_{n(t)}(t)))\right.
 \eea
  Define for $h\in[0,1]$ the interpolating process
 \be\Eq(comp.13)
 x_i^h=\sqrt{h}x_i+\sqrt{1-h}\overline{y}_i, \quad h\in [0,1].
 \ee
 The interpolating process $\{x_i^h,i\leq n(t)\}$ is a  Gaussian process with the same underlying  
  branching structure and speed function
  \be\Eq(comp.rem)
  \Sigma_h^2(s)=h\Sigma^2(s)+(1-h)\overline{\Sigma}^2(s).
  \ee
 Then, \eqv(comp.12) is equal to 
 \be\Eq(comp.14)
 \E_B\left(\int_0^1 \frac{d}{dh}f_{t,\k}(x^h(t))dh\right),
 \ee
 where 
 \be\Eq(comp.15)
 \frac{d}{dh}f_{t,\k}(x^h(t))=\frac{1}{2}\sum_{i=1}^{n(t)}\frac{\partial}{\partial x_i}f_{t,\k}(x_1^h(t),
 \dots,x_{n(t)}^h(t))\left[\frac{1}{\sqrt h}x_i(t)-\frac{1}{\sqrt{1-h}}\overline{y}_i(t)\right],
 \ee
 and 
 derivative:
 \be\Eq(comp.16)
 \frac{\partial}{\partial x_i}f_{t,\k}(x_1^h(t),\dots,x_{n(t)}^h(t))
 =-\left(\sum_{l=1}^k  
 \frac {c_l }{\sqrt {2\pi\k^2}}\eee^{ -\frac{(x_i^h(t)-\tilde m(t)-u_l)^2}{2\k^2}}\right)
 f_{t,\k}(x_1^h(t),\dots,x_{n(t)}^h(t)).
 \ee
  The key idea is to introduce a localisation  condition on the path of $x_i^h
  $ into \eqv(comp.15) at this stage. Note that it is not surprising
  at this point, since localising paths has been  a crucial tool in almost all computations 
  involving BBM, see already Bramson's paper \cite{B_M}. To do so, we insert into  the right-hand 
  side of \eqv(comp.15) a one in the form
 \be\Eq(comp.17)
 1=\1_{x_i^h\in\TT_{t,\bar I,\Sigma_h^2}^{\gamma}}
 +\1_{x_i^h\not\in\TT_{t,\bar I,\Sigma_h^2}^{\gamma}},
 \ee
 with 
 \be\Eq(i.1)
 \bar I \equiv \left[t(\d^<_0(t)\land \d^<_1(t)),t(1-\d^>_1(t))\right],
 \ee
 and $\TT_{t,I,\Sigma_h^2}^{\gamma}$  defined in \eqv(path.2). 
 Here $\d_1^{<,>}(t)\equiv \d^{<,>}(t)$, while $\d_0^{<,>}$ is defined 
 in the same way, but with respect to the speed function $\overline \S^2$ instead of 
 $\S^2$.  We call the two resulting summands  $\overline{S}^h_{<}$  and  
 $\overline{S}^h_{>}$, 
 respectively.

Note that, when proving the  lower bound, we choose instead of $\bar I$, the interval
\be\Eq(i.2)
 \underline I \equiv \left[t(\d^<_0(t)\land \d^<_1(t)),t(1-\d^>_0(t))\right].
 \ee
 
 The next lemma shows that $\overline{S}^h_{>}$ does not contribute to the expectation in \eqv(comp.15), as $t\to\infty$.
 
 \begin{lemma}\TH(lem.zero) With the notation above, we have
 \be\Eq(comp.18)
  \lim_{t\to\infty}\E_n\left(\int_0^1\E_B(\vert \overline{S}^h_{>}\vert)dh\right)=0.
  \ee
 \end{lemma}
 The proof of this lemma will be postponed. 
  
 We continue with the proof of Lemma \thv(lem.number). We are left with controlling, for fixed 
 $h\in (0,1)$,
 \be\Eq(comp.27)
 \E_B(\overline{S}^h_<)=\E_B\left(\frac{1}{2}\sum_{i=1}^{n(t)}\frac{\partial}{\partial x_i}f_{t,\k}(x^h(t))\1_{x_i^h \in \TT_{t,\bar I,\Sigma_h^2}^\g}\left[\frac{x_i(t)}{\sqrt h}-\frac{\overline{y}_i(t)}{\sqrt{1-h}}\right]\right).
 \ee
By the definition of $\TT_{t,\bar I,\Sigma_h^2}^{\gamma}$, 
 \be\Eq(comp.28)
 \1_{x_i^h \in \TT_{t,\bar I,\Sigma_h^2}^\g}= \1_{\forall s\in \bar I: \left\vert \xi_i^h(s)\right\vert\leq (\Sigma_h^2(s)\wedge  (t-\Sigma_h^2(s)))^\g},
 \ee
 where $\xi^h_i(s)\equiv x_i^h(s)-\frac {\S_h^2(s)}{t}x^h_i(t)$ is 
 a time changed Brownian bridge from $0$ to $0$ in   time $t$, which, as we 
 recall,  is independent of the endpoint  $x_i^h(t)$. 
 We want to apply a Gaussian integration by parts formula  to \eqv(comp.27).
 However, we need to take care of the fact that  each summand in \eqv(comp.27) depends on the 
 whole path of $\xi_i$ through the term in \eqv(comp.28). Therefore, we first approximate that 
 indicator function in \eqv(comp.28) by a discretised version. Let,  for $N\in\N$,   
 $t_1,\dots,t_{2^N}$ be  a sequence of equidistant points  in $[0,t]$. Define the following 
 sequence of approximations, $G_{h,N}: C(\R_+)\rightarrow \R$,  to the indicator function in \eqv(comp.28),
 \be\Eq(comp.200)
G_{h,N}(x)\equiv  g_h(x(t_1),\dots,x(t_{2^N})),
\ee
where
\bea\Eq(comp.201)
 g_{h}(z_1,\dots,z_{2^N})
&=&\prod_{\ell=1}^{2^N}\Bigl[\1_{t_\ell\in \bar I}
 \chi^{2^{-N}}\left((\S^2_h(t_\ell)\wedge 
 (t-\Sigma_h^2(t_\ell)))^\g-z_\ell \right)
 \\\nonumber&\times& \chi^{2^{-N}}\left((\S^2_h(t_\ell)\wedge 
 (t-\Sigma_h^2(t_\ell)))^\g+z_\ell \right)
 +\1_{t_\ell\not\in \bar I}\Bigr].
 \eea
 Clearly $G_{h,N}\rightarrow \1_{x \in \TT_{t,\bar I,\Sigma_h^2}^\g}$, pointwise, as $N\uparrow
 \infty$, while the derivatives $\frac{\partial}{\partial z_\ell }g_h(z_1,\dots, z_{2^N})$ are bounded.
  By the Gaussian integration by parts formula (see, e.g.,
 \cite[Appendix A.5]{Tala1}), we have, for any given $N\in \N$, 
 \bea\Eq(comp.30)
&& \E_B\left(x_i(t)\frac{\partial}{\partial x_i}f_{t,\k}(x^h(t))
G_{h,N}(\xi^h)\right) \\
 &=& \sum_{\ell=1}^{2^N} \E_B\left((x_i(t) \xi_i^h(t_\ell)\right)\E_B\left(f_{t,\k}(x^h(t))
 \frac{\partial}{\partial z_{ \ell}}g_{h}(\xi_i^h(t_1),\dots,\xi_i^h(t_{2^N}))
 \right)\nonumber\\\nonumber
 &&+  \sum_{j=1}^{n(t)}\E_B(x_i(t)x_j^h(t))\E_B\left(G_{h,N}(\xi^h)\frac{\partial^2}{\partial x_j\partial x_i}f_{t,\k}(x^h(t))
 \right).
 \eea
 But for all $\ell\in\{1,\dots,2^N\}$, 
 \bea\Eq(comp.30')
 \E_B\left(x_i(t)\xi_i^h(t_\ell)\right)&=& \sqrt h \E_B \left(x_i(t) x_i(t_\ell)-x_i(t) \frac {\S^2(t_\ell)} t 
 x_i(t)\right)\\\nonumber &=&\sqrt h\left(\S^2(t_\ell)-\S^2(t_\ell)\right)= 0,
 \eea
 and hence the second line in \eqv(comp.30) is equal to zero. 
 In exactly the same way we get
 \bea\Eq(comp.32.1)
 &&\E_B\left(\bar y_i\frac{\partial}{\partial x_i}f_{t,\k}(x_1^h(t),\dots,x_{n(t)}^h(t))\right)
\\\nonumber&& = \sum_{j=1}^{n(t)}\E_B(\bar y_i(t)x_j^h(t))\E_B\left(G_{h,N}(\xi^h)\frac{\partial^2}{\partial x_j\partial x_i}f_{t,\k}(x^h(t))\right).
 \eea
  Computing the covariances,  
 $ \E_B\left(x_i(t)x_j^h(t)\right)=\sqrt {h}\E \left(x_i(t)x_j(t)\right)$ and 
 $\E_B\left(\bar y_i(t)x_j^h(t)\right)=\sqrt {1-h}\E \left
 (\bar y_i(t)\bar y_j(t)\right)$, we obtain that 
 \bea\Eq(noweak.5)
 &&\E_B\left(\frac{1}{2}\sum_{i=1}^{n(t)}\frac{\partial}{\partial x_i}f_{t,\k}(x^h(t))
 G_{h,N}(\xi^h)\left[\frac{x_i(t)}{\sqrt h}-\frac{\overline{y}_i(t)}{\sqrt{1-h}}\right]\right)
\\\nonumber
&&=\sum_{\stackrel{i,j=1}{i\neq j}}^{n(t)}\left[\E_B(x_i(t)x_j(t))-\E_B(\overline{y}_i(t)\overline{y}_j(t))\right]\E_B\left(G_{h,N}(\xi^h)\frac{\partial^2f_{t,\k}(x^h(t))}{\partial x_i\partial x_j}\right),
 \eea
where crucially the terms with $i=j$ have cancelled. 
This equation holds for any $N$, 
and since $0\leq G_{h,N}(x)\leq 1 $, and the integral 
$\E_B\left(\frac{\partial^2f_{t,\k}(x^h(t))}{\partial x_i\partial x_j}\right)$ is finite 
(trivially, since
the mixed second derivatives of $f$ are bounded), by Lebesgue's dominated 
convergence theorem, the right hand side converges  to the expression where
$G_{h,N}$ is replaced by its limit. 
Similarly, in the left hand side we can apply Lebesgue's theorem,
majorising the integrands by $C |x_i(t)|$, etc, which are all integrable. 
Thus we obtain that 
\bea\Eq(noweak.7)
 &&\E_B\left(\frac{1}{2}\sum_{i=1}^{n(t)}\frac{\partial}{\partial x_i}f_{t,\k}(x^h(t))
 \1_{\1_{x_i^h \in \TT_{t,\bar I,\Sigma_h^2}^\g}}\left[\frac{x_i(t)}{\sqrt h}-\frac{\overline{y}_i(t)}{\sqrt{1-h}}\right]\right)
\\\nonumber
&&=\sum_{\stackrel{i,j=1}{i\neq j}}^{n(t)}\left[\E_B(x_i(t)x_j(t))-\E_B(\overline{y}_i(t)\overline{y}_j(t))\right]\E_B\left(\1_{x_i^h \in \TT_{t,\bar I,\Sigma_h^2}^\g}
\frac{\partial^2f_{t,\k}(x^h(t))}{\partial x_i\partial x_j}\right),
 \eea
 Introducing 
 \be\Eq(comp.35)
 1= \1_{d(x^h_i(t),x^h_j(t))\in\bar I}+\1_{d(x^h_i(t),x^h_j(t))\not\in\bar I},
 \ee
 into \eqv(noweak.7), we rewrite the right hand side of \eqv(noweak.7) as 
 $(\overline{T1})+(\overline{T2})$, where
 \bea\Eq(comp.36)
&&(\overline{T1})\\\nonumber
&&= \sum_{\stackrel{i,j=1}{i\neq j}}^{n(t)}\E_B\left[x_i(t)x_j(t)-\overline{y}_i(t)\overline{y}_j(t)\right]\E_B\left( \1_{d(x^h_i(t),x^h_j(t))\in\bar I}\1_{x_i^h\in\TT^\g_{t,\bar 
I,\Sigma^2_h}}\frac{\partial^2 f_{t,\k}(x^h(t))}{\partial x_i\partial x_j}\right), 
\eea
\bea
\Eq(comp.36.1)
&&(\overline{T2})\\\nonumber
&& =\sum_{\stackrel{i,j=1}{i\neq j}}^{n(t)}
\E_B\left[x_i(t)x_j(t)-\overline{y}_i(t)\overline{y}_j(t)\right]
\E_B\left( \1_{d(x^h_i(t),x^h_j(t))\not\in\bar I}
\1_{x_i^h\in\TT^\g_{t,\bar I,\Sigma^2_h}}\frac{\partial^2 f_{t,\k}(x^h(t))}{\partial x_i\partial x_j}\right).
 \eea
The term $(\overline{T1})$ is controlled by the following Lemma.
 
 \begin{lemma}\TH(lem.t1) With the notation above, there exists a constant 
 $\wt C<\infty$, independent of $t$ and $\k^2$, such that for all $t$ large and $\k^2$ small enough,
 \be\Eq(lem.t1.1)
  \left| \E_n\left(\int_0^1(\overline{T1})dh\right)\right|\leq \wt C \int_{\bar I}
  \left|\eee^{-s+\Sigma^2(s)+O(s^\g)}-\eee^{-s+\overline{\Sigma}^2(s)+O(s^\g)}\right|ds.
  \ee
 \end{lemma}
 
 Moreover, we have: 
 
 \begin{lemma}\TH(lem.t3) If  $\Sigma^2$ satisfies (A1)-(A3), and $\overline{\Sigma}^2$ is  as defined in \eqv(comp.1), then 
 \be\Eq(lem.t3.1)
  \lim_{t\to\infty}  \int_{\bar I}\left|\eee^{-s+\Sigma^2(s)+O(s^\g)}-\eee^{-s+\overline{\Sigma}^2(s)+O(s^\g)}\right|ds
  =0.
  \ee
 \end{lemma}
 We postpone the proofs of these lemmata to Section \ref{section.5}.
 
Up to this point the proof of \eqv(comp.11) works exactly as the proof of \eqv(comp.8) when $\overline{\Sigma}^2$ is replaced by $\underline{\Sigma}^2$.
 For $(\overline{T2})$ and $(\underline{T2})$ we have:
  
\begin{lemma}\TH(lem.t2) For almost all realisations  of the Galton-Watson process, the following statements hold:
\begin{itemize}
\item[(i)] If $\lim_{t\uparrow \infty}\d^<(t)=0$, then
 \be\Eq(lem.t2.1)
    (\overline{T2})\leq 0, 
  \ee
  and
   \be\Eq(lem.t2.12)
    (\underline{T2})\geq 0.
  \ee
  \item[(ii)] If $\lim_{t\uparrow \infty}\d^<(t)=\d^<> 0$, then 
  \be\Eq(lem.t2.123)
  \lim_{t\uparrow \infty}(\overline{T2})\leq 0,
  \ee
  and
  \be\Eq(lem.t2.1234)
  \lim_{t\uparrow \infty}(\underline{T2})\geq 0.
  \ee
  \end{itemize}
 \end{lemma}
 The proof of this lemma is again postponed to Section \ref{section.5}.
 
From Lemma \thv(lem.t1), Lemma \thv(lem.t3), and Lemma \thv(lem.t2) together with 
\eqv(comp.27), the bound  \eqv(noweak.0) follows. Since the left and right hand 
sides involve expectations over bounded functions, we may pass to the limit 
$\k^2\downarrow 0$. 
This implies \eqv(comp.8). As pointed out, using Lemma \thv(lem.t2), the bound \eqv(comp.11) also follows. Thus, Lemma \thv(lem.number) is proved, once we provide the postponed proofs of the various lemmata above.
 \end{proof}

We conclude the proof of Proposition \thv(prop.Laplace).

\begin{proof}[Proof of Proposition \thv(prop.Laplace)]
Taking the limit as $t\uparrow \infty$ in \eqv(comp.8) and \eqv(comp.11) 
and using Lemma \thv(lem.limit) gives, in the case $A'(1)<\infty$, 
\bea\Eq(lap.1)
\limsup_{t\uparrow \infty}\LL_{u_1,\dots,u_k}(t,c)&\leq& \LL_{u_1,\dots,u_k}(c)\\
\liminf_{t\uparrow \infty}\LL_{u_1,\dots,u_k}(t,c)&\geq& \LL_{u_1,\dots,u_k}(c)
\eea
 Hence $\lim_{t\uparrow  \infty}\LL_{u_1,\dots,u_k}(t,c)$ exists and is equal to $\LL_{u_1,\dots,u_k}(c)$.
 In the case $A'(1)=\infty$, the same result follows if in addition we take 
 $\rho\uparrow \infty$ after taking $t\uparrow \infty$.
  This  concludes the proof of Proposition \thv(prop.Laplace).
\end{proof}

\section{Proofs of the auxiliary lemmata} \label{section.5}
We now provide the proofs of the lemmata from the last section whose proofs we had postponed.

\begin{proof}[Proof of Lemma \thv(lem.zero)] We have
\be\Eq(comp.19)
\E_B(\vert \overline{S}^h_{>}\vert)\leq \frac12\sum_{i=1}^{n(t)}\sum_{l=1}^k c_l\E_B\left(
\frac {\eee^{-\frac{(x_i^h(t)-\tilde m(t)-u_l)^2}{2\k^2}}}{\sqrt {2\pi\k^2}}
\1_{x_i^h\not\in\TT_{t,\bar I,\Sigma_h^2}^{\gamma}}
\left[\sfrac{|x_i(t)|}{\sqrt{h}}+\sfrac{|\overline{y}_i(t)|}{\sqrt{1-h}}\right]\right).
\ee
We use the fact that the condition in the indicator function involves only the time 
changed
Brownian bridge, $\xi_i^h(s)=x_i^h(s)-\frac {\S^2_h(s)}t x^h_i(t)$, which is
 independent of the endpoint $x_i^h(t)$, and of course also of $x_i(t)$. This implies that
 \bea\Eq(noweak.10)
 &&\E_B\left(
\frac {\eee^{-\frac{(x_i^h(t)-\tilde m(t)-u_l)^2}{2\k^2}}}{\sqrt {2\pi\k^2}}
\1_{x_i^h\not\in\TT_{t,\bar I,\Sigma_h^2}^{\gamma}}
\sfrac{|x_i(t)|}{\sqrt{h}}\right)
\\\nonumber
 &&=\E_B\left(
\frac {\eee^{-\frac{(x_i^h(t)-\tilde m(t)-u_l)^2}{2\k^2}}}{\sqrt {2\pi\k^2}}\sfrac{|x_i(t)|}{\sqrt{h}}\right)
\P_B\left(x_i^h\not\in\TT_{t,\bar I,\Sigma_h^2}^{\gamma}\right),
\eea
and similarly for the terms involving $\bar y$. The computation of the first expectation is 
a straightforward Gaussian integration involving two independent Gaussians. In fact we can write 
\be\Eq(noweak.12)
\E_B\left(
\frac {\eee^{-\frac{(x_i^h(t)-\tilde m(t)-u_l)^2}{2\k^2}}}{\sqrt {2\pi\k^2}}|x_i(t)|\right)
=\int \frac{dz_1dz_2}{(2\pi)^{3/2} t\k}\eee^{-\frac 12 (\underline z, M\underline{z})+ (\underline {v},\underline {z}) - (\tilde m(t)+u_l)^2/2\k^2} |z_1|,
\ee
where 
\be\Eq(noweak.13)
M\equiv \left(\begin{matrix} \frac{\k^2+th}{t\k^2}&\sqrt{h(1-h)}/\k^2\\
\sqrt{h(1-h)}/\k^2 & \frac{\k^2+t(1-h)}{t\k^2}\end{matrix}\right),\qquad
\underline{v}\equiv \frac {\tilde m(t)+u_l}{\k^2}\left(\begin{matrix} \sqrt h\\\sqrt{1-h}
\end{matrix}\right).
\ee
Note that $\det M =t^{-2}+t^{-1}\k^{-2}$, and its eigenvalues are
given by
\be\Eq(noweak.14)
\l_{\pm} = t^{-1}+\k^{-2} \pm \sqrt{\k^{-4}+t^{-1}\k^{-2}}.
\ee
Importantly, the smaller eigenvalue behaves, when $\k^2/t$ tends to zero,
as
\be\Eq(noweak.14.1)
\l_-=1/(2t)\left(1+O(\k^2/t)\right).
\ee 
The remaining calculations amount to completing the square. With 
\be\Eq(noweak.15)
\underline {a}\equiv \frac {\tilde m(t)+u_l}{\k^2t^{-1}+1}\left(\begin{matrix} \sqrt h\\\sqrt{1-h}
\end{matrix}\right),
\ee
we can rewrite the right hand side of \eqv(noweak.12) as 
\be\Eq(noweak.16)
\frac{\eee^{- \frac12\frac{(\tilde m(t)+u_l)^2}{t+\k^2}}}{(2\pi)^{1/2}\sqrt{ \k^2+t}}\int \frac{dz_1dz_2}{2\pi \sqrt{\det({M^{-1}})}}
\eee^{-
\frac 12 \left(\underline {z}-\underline {a}, M(\underline{z}-\underline {a})\right)^2}
  |z_1|.
 \ee 
 Now it is plain that the last expectation is bounded by
\be\Eq(noweak.17)
| \underline {a}_1| + const.  (\l_{-})^{-1/2} \leq \sqrt h\frac{\tilde m(t)+u_l}{\k^2/t+1}
+2 t^{1/2}(1+ O(\k^2t^{-2})) \leq const. (\sqrt h t +\sqrt t),
\ee 
with the constant uniform in, say, $\k^2\leq 1, t\geq 100$. 
This allows us to bound \eqv(noweak.16) by a uniform constant times
\be\Eq(noweak.18)
\left(\sqrt {ht}+ 2\right) \eee^{-t/(1+\k^2/t) + \ln t/(1+\k^2/t)} \leq const.
\eee^{-t}\left(\sqrt {ht^3}+ 2t\right).
\ee
Next we bound the probability that the Brownian bridge does not stay in the tube.
For this we use Lemma \thv(BB.basic).
 Note that by construction, if $s\in \bar I$, then
for all $h\in [0,1]$, 
$\S^2_h\geq  Dt^{1/3}$, and $\S^2_h\leq t-Dt^{1/3}$,
for some constant $0<D<\infty$, depending only on the function $A$. Thus, 
by Eq. \eqv(path.3.1)) of  Lemma  \thv(BB.basic), 
\be\Eq(comp.26)
\P_B\left(x_i^h\not\in\TT_{t,\bar I,\Sigma_h^2}^{\gamma}\right)\leq 
8\sum_{k= Dt^{1/3}}^\infty k^{1/2-\g}\eee^{-k^{2\g-1}/2}.
\ee
We are now ready to insert everything back into \eqv(comp.19). 
This gives that, uniformly in $\k^2$ small and $t$ large (as above)
\be\Eq(noweak.17)
\E_B\left(|\overline{S}^h_>|\right)
\leq n(t)  const. \sum_{l=1}^k c_l \eee^{-t} \left(2\sqrt {t^3}+ 2t/\sqrt h +2t/\sqrt{1-h}\right)
\eee^{-D^{2\g-1}t^{(2\g-1)/3}}.
\ee
Integrating over $h$ and taking the expectation with respect to the Galton-Watson 
process yields
\be\Eq(noweak.18)
\E_n\left(\int_0^1\E_B\left(|\overline{S}^h_>|\right)dh\right)
\leq   const. \sum_{l=1}^k c_l  \sqrt {t^{3/2}}
\eee^{-D^{2\g-1}t^{(2\g-1)/3}},
\ee
which tends to zero as $t\uparrow \infty$, uniformly in $\k\leq 1$, as claimed, if $\g>1/2$.
 This proves the assertion of the lemma.
\end{proof}

 \begin{proof}[Proof of Lemma \thv(lem.t2)] We first proof \eqv(lem.t2.1).
 Observe that 
 \be 
 d(x_i(t),x_j(t))=d(\overline{y}_i(t),\overline{y}_j(t))=d(x_i^h(t),x_j^h(t)).
 \ee
 Moreover,  for all $1\leq i,j\leq n(t), i \neq j$, 
 \be\Eq(t2.1)
 \1_{x_i^h\in\TT^\g_{t,\bar I,\Sigma^2_h}}\frac{\partial}{\partial x_i\partial x_j}f_{t,\k}(x_1^h(t),\dots,x^h_{n(t)}(t)) \geq 0.
 \ee
 For $d(x_i(t),x_j(t))\in\left[0,t(\d_1^<(t)\land\d_0^<(t))\right)$, we distinguish the cases 
 $\lim_{t\to\infty} \d^<(t)> 0 $ and $\lim_{t\to\infty} \d^<(t)=0 $, respectively. 

 If $\lim_{t\to\infty} \d^<(t)=\d^<> 0 $, then
  $A(x)=\overline A(x)=\underline A(x)=0$,
   for all $x\in[0,t(\d_1^<(t)\land\d_0^<(t)))$. 
  Thus all the terms in both $(\overline{T2})$ and $(\underline{T2})$ with $i,j$ such that
  $ d(x_i(t),x_j(t))\in[0,t(\d_1^<(t)\land\d_0^<(t))) $ vanish.
   
 Next consider the case where $\lim_{t\to\infty} \d^<(t)=  0 $.
  By Lemma \thv(lem.properties) we have, for $\overline{y}_i(t), \overline{y}_j(t)$ with 
  $d(\overline y_i(t),\overline{y}_j(t))\in[0,t(\d_1^<(t)\land\d_0^<(t)))$, that
    \bea\Eq(t2.2)
  \E_B(\overline{y}_i(t),\overline{y}_j(t))
  &=& \overline{\S}^2\left(
  d(\overline{y}_i(t),\overline{y}_j(t))\right)\nonumber\\
   &\geq& \Sigma^2(d(\overline{y}_i(t),\overline{y}_j(t)))\nonumber\\
 &=&\Sigma^2(d(x_i(t),x_j(t)))=\E_B(x_i(t),x_j(t)).
  \eea
   For \eqv(lem.t2.12) we proceed in the same way but instead  of \eqv(t2.2) we have, for
   $d(\underline{y}_i(t),\underline{y}_j(t))\in[0,t(\d_1^<(t)\land\d_0^<(t)))$,
  \bea\Eq(t2.4)
  \E_B(\underline{y}_i(t),\underline{y}_j(t))
  &=& \underline{\S}^2\left(
  d(\underline{y}_i(t),\underline{y}_j(t))\right)\nonumber\\\nonumber
  &\leq& \Sigma^2(d(\underline{y}_i(t),\underline{y}_j(t)))\\
  &=&\Sigma^2(d(x_i(t),x_j(t)))=\E_B(x_i(t),x_j(t)).
  \eea

  If  $d(\overline{y}_i(t),\overline{y}_j(t))\in[t(1- \d_1^>(t)),t]$, resp. 
  $d(\underline{y}_i(t),\underline{y}_j(t))\in[t(1- \d_0^>(t)),t]$, 
  we obtain in both cases from  Lemma \thv(lem.properties) that
  \be\Eq(t2.3)
  \E_B(\overline{y}_i(t),\overline{y}_j(t))\geq \E_B(x_i(t),x_j(t)),
  \ee
  and 
  \be\Eq(t2.5)
  \E_B(\underline{y}_i(t),\underline{y}_j(t))\leq \E_B(x_i(t),x_j(t)), 
  \ee
  respectively. 
  This concludes the proof of Lemma \thv(lem.t2).
 \end{proof}

 \begin{proof}[Proof of Lemma \thv(lem.t1) ]
 We have that
  \bea\Eq(t1.1) 
  &&\left |\E_n\left(\int_0^1(\overline{T1})dh\right)\right|\leq
 \E_n\bigg(\sum_{\stackrel{i,j=1}{i\neq j}}^{n(t)}\left|\E_B(x_i(t)x_j(t))-
\E_B(\overline{y}_i(t)\overline{y}_j(t))\right|\\\nonumber
&&\qquad\times\int_0^1\E_B\left(\1_{d(x^h_i(t),x^h_j(t))\in\bar I}\1_{x_i^h\in\TT^\g_{t,\bar I,\Sigma^2_h}} \frac{\partial^2 f_{t,\k}(x^h(t))}{\partial x_i\partial x_j}\right)dh\bigg).
  \eea
   By definition of $f_{t,\k}$ we have for $i\neq j$
  \bea\Eq(t1.no.50)
  \frac{\partial^2 f_{t,\k}(x^h(t))}{\partial x_i\partial x_j} &=&\sum_{l,\bar l=1}^k 
  \frac{c_l 
  c_{\bar l}}{2\pi\k^2}\eee^{\frac{-(x_i^h(t)-\tilde m(t)-u_l)^2}{2\k^2}}
  \eee^{\frac{-(x_j^h(t)-\tilde m(t)-
  u_{\bar l})^2}{2\k^2}}f_{t,\k}(x^h(t))\nonumber\\
  &\leq& \sum_{l,\bar l=1}^k 
  \frac{c_l 
  c_{\bar l}}{2\pi\k^2}\eee^{\frac{-(x_i^h(t)-\tilde m(t)-u_l)^2}{2\k^2}}
  \eee^{\frac{-(x_j^h(t)-\tilde m(t)-
  u_{\bar l})^2}{2\k^2}},
  \eea
  where we used that $f_{t,\k}\leq 1$. Using this bound we get that \eqv(t1.1) is bounded from above by
  \bea\Eq(t1.no.51)
  &&\E_n\Biggl(\sum_{\stackrel{i,j=1}{i\neq j}}^{n(t)}\left|\E_B(x_i(t)x_j(t))-
\E_B(\overline{y}_i(t)\overline{y}_j(t))\right|
 \int_0^1\E_B\Biggl(\1_{d(x^h_i(t),x^h_j(t))\in\bar I}\1_{x_i^h\in\TT^\g_{t,\bar I,\Sigma^2_h}}\nonumber\\
&&\qquad\times \sum_{l,\bar l=1}^k 
  \frac{c_l 
  c_{\bar l}}{2\pi\k^2}\eee^{\frac{-(x_i^h(t)-\tilde m(t)-u_l)^2}{2\k^2}}
  \eee^{\frac{-(x_j^h(t)-\tilde m(t)-
  u_{\bar l})^2}{2\k^2}}\Biggr)dh\Biggr).
  \eea
  We introduce the shorthand notation
 \bea\Eq(t1.not)
 A_1&=&\Sigma_h^2(s)/t,\nonumber\\
 A_2&=&1-\Sigma_h^2(s)/t.
 \eea 
 To compute the expectation in \eqv(t1.no.51) we fix the time of the most recent common 
 ancestor of $x_i$ and $x_j$ as $s$ and integrate over it.  
 Then the pair  $(x_i^h(t),x_j^h(t))$ has the same distribution 
 as $(Y+X_1,Y+X_2)$, where $Y,X_1,X_2$ are independent centred 
 Gaussian random variables with variance $tA_1,tA_2$, and $tA_2$, respectively.
 We  also  relax 
 the tube condition except at the splitting time of the two particles. From this we see that 
the expression in  \eqv(t1.no.51) is bounded from above by
 \bea\Eq(t1.2)
 &&C \sum_{l,\bar l=1}^k c_l c_{\bar l}\int_{\bar I}|\Sigma^2(s)-\overline{\Sigma}^2(s)|\eee^{2t-s}\\\nonumber&&\quad\times\int_0^1\int_{A_1\tilde m(t)-J(s,\g)}^{A_1\tilde m(t)+J(s,\g)}
 Q(y,u_l,t) Q(y,u_{\bar l},t)
 \eee^{-\frac{y^2}{2tA_1}}\sfrac{dy}{\sqrt{2\pi t A_1}}dh ds,
 \eea
 where $\infty>C>0$ is a constant,  
 \be\Eq(t1.3)
 J(s,\g)= \left(\Sigma_h^2(s)\wedge (t-\Sigma_h^2(s))\right)^\g=( (A_1\wedge A_2)t)^\g,
 \ee
 and for $1\leq l\leq k$
 \be\Eq(t1.3.1)
 Q(y,u_l,t)=\int_{-\infty}^{\infty} \eee^{-(x+y-\tilde m(t)-u_l)^2/2\k^2}\eee^{-\frac{x^2}{2tA_2}}\sfrac{dx}{\sqrt{(2\pi)^2 \k^2 t A_2}}.
 \ee
 We first compute $ Q(y,u_l,t)$. We change variables in \eqv(t1.3.1)
 \be\Eq(t1.4)
 x=z+\frac{tA_2(\tilde m(t)-y-u_l)}{\k^2+tA_2}
 \ee
 and obtain that \eqv(t1.3.1) can be written as 
 \bea\Eq(t1.no.1)
 Q(y,u_l,t)
 &=&\eee^{-\frac{(\tilde m(t)-y-u_l)^2}{2(\k^2+tA_2)}}\int_{-\infty}^{\infty}\frac{\eee^{-\frac{z^2(\k^2+A_2t)}{2\k^2 A_2t}}}{ \sqrt{(2\pi)^2\k^2A_2 t}}dz \nonumber
 \\ &=& \frac{\eee^{-\frac{(\tilde m(t)-y-u_l)^2}{2(\k^2+tA_2)}}}{\sqrt{2\pi( \k^2+tA_2)}}.
  \eea
  Plugging this into \eqv(t1.2) we get 
  \bea\Eq(t1.no.3)
   &&C \sum_{l,\bar l=1}^k c_l c_{\bar l}\int_{\bar I}|\Sigma^2(s)-\overline{\Sigma}^2(s)
   |\eee^{2t-s}\\\nonumber&&\quad\times\int_0^1\int_{A_1\tilde m(t)-J(s,\g)}^{A_1\tilde m(t)+J(s,\g)}
  \frac{\eee^{-\frac{(\tilde m(t)-y-u_l)^2+(\tilde m(t)-y-u_{\bar l})^2}{2(\k^2+tA_2)}}
}{ 2\pi  (\k^2+tA_2)} \eee^{-\frac{y^2}{2tA_1}}\sfrac{dy}{\sqrt{2\pi t A_1}}dh ds,
  \eea
  In the integral with respect to $y$ we now  change variables to
  \be\Eq(t1.no.4)
  -w=y-\frac{(2\tilde m(t)-u_l-u_{\bar l})A_1t}{\k^2+(1+A_1)t},
  \ee
 and drop terms that are bounded  uniformly in $t$ and $\k^2$ by constants to see that
 \eqv(t1.no.3) is less than or equal to 
  \bea\Eq(t1.no.4)
  & & \wt C \sum_{l,\bar l=1}^k c_l c_{\bar l}\int_{\bar I}|\Sigma^2(s)-
  \overline{\Sigma}^2(s)|\eee^{2t-s}\int_0^1 \int_{\frac{A_1A_2t\tilde m(t)-A_1\tilde m(t)
  \k^2-tA_1(u_l+u_{\bar l})}{\k^2+(1+A_1)t}-J(s,\g)}^{\frac{A_1A_2t\tilde m(t)-A_1\tilde 
  m(t)\k^2-tA_1(u_l+u_{\bar l})}{\k^2+(1+A_1)t}+ J(s,\g)}\nonumber\\
 & &\quad\quad \times \frac{\eee^{-\frac{\tilde m(t)^2}{\k^2+(1+A_1)t}} \eee^{\frac{-
 w^2(\k^2+(1+A_1)t)}{2(\k^2+tA_2)A_1t}}}{ 2\pi  (\k^2+tA_2)}\sfrac{dwdhds}
 {\sqrt{2\pi t A_1}},
 \eea
with $\wt C$ a new constant independent of $t$ and $\k^2$.  
 Since,  for each $h\in(0,1)$,
 \bea\Eq(t1.100)
  &&\frac{\sqrt{1+A_1}}{\sqrt{tA_1A_2}}\left( \frac{A_1A_2\tilde m(t)}{A_1+1}-J(s,\g)\right)\nonumber\\
&\geq&((A_1\wedge A_2)\tilde m(t))^{-1/2}\left(\frac{1}{4}(A_1\wedge A_2)\tilde m(t)-(A_1\wedge A_2)^\g t^\g\right),
 \eea
which tends to $+\infty$, as $t\uparrow\infty$, 
we can use the Gaussian tail bound  \eqv(gaussian) 
in the  integral over $w$ to show that 
 \bea\Eq(t1.10)\nonumber
&&\eee^{2t-s} \int_0^1 \int_{\frac{A_1A_2t\tilde m(t)-A_1\tilde m(t)
  \k^2-tA_1(u_l+u_{\bar l})}{\k^2+(1+A_1)t}-J(s,\g)}^{\frac{A_1A_2t\tilde m(t)-A_1\tilde 
  m(t)\k^2-tA_1(u_l+u_{\bar l})}{\k^2+(1+A_1)t}+ J(s,\g)}
\frac{\eee^{-\frac{\tilde m(t)^2}{\k^2+(1+A_1)t}} \eee^{\frac{-
 w^2(\k^2+(1+A_1)t)}{2(\k^2+tA_2)A_1t}}}{ 2\pi  (\k^2+tA_2){\sqrt{2\pi t A_1}}}{dwdh}
 \\\nonumber
&&\leq\eee^{2t-s}\int_0^1 
 \eee^{-\frac{1}{2}\frac{\k^2+(1+A_1)t}{(\k^2+tA_2)A_1t}\left( \frac{A_1A_2t\tilde m(t)-A_1\tilde m(t)\k^2-tA_1(u_l+u_{\bar l})}{\k^2+(1+A_1)t}-J(s,\g) \right)^2}\frac{\eee^{-\frac{\tilde m(t)^2}{\k^2+(1+A_1)t}}}{ 2\pi  (\k^2+tA_2)}\nonumber\\
  & &\qquad\times   
  \left[\left(\sfrac{A_1A_2t\tilde m(t)-A_1\tilde m(t)
  \k^2-tA_1(u_l+u_{\bar l})}{\k^2+(1+A_1)t}-J(s,\g)\right)\sfrac{\k^2+(1+A_1)t}{(\k^2+tA_2)A_1t}\right]^{-1}
  \sfrac{dh}{\sqrt{2\pi t A_1}}.\nonumber\\
  &&=\eee^{2t-s}\int_0^1 
 \eee^{-\frac{1}{2}\frac{\k^2+(1+A_1)t}{(\k^2+tA_2)A_1t}\left( \frac{A_1A_2t\tilde m(t)-A_1\tilde m(t)\k^2-tA_1(u_l+u_{\bar l})}{\k^2+(1+A_1)t}-J(s,\g) \right)^2}\eee^{-\frac{\tilde m(t)^2}{\k^2+(1+A_1)t}} \nonumber\\
  & &\qquad\times   
  \sfrac{\sqrt{A_1 t}}{A_1A_2t\tilde m(t)-A_1\tilde m(t)
  \k^2-tA_1(u_l+u_{\bar l})-J(s,\g)(\k^2+(1+A_1)t)} \sfrac{dh}{ (2\pi)^{\frac{3}{2}}}.
 \eea
 By the definition of $J(s,\g)$ we can bound \eqv(t1.10) from above by 
 \be\Eq(t1.no.40)
  \wh C\eee^{2t-s}\int_0^1 \sfrac{\sqrt{A_1 t}}{A_1A_2t\tilde m(t)-A_1\tilde m(t)
  \k^2-tA_1(u_l+u_{\bar l})-J(s,\g)(\k^2+(1+A_1)t)}
  \eee^{-\frac{(1+A_2)\tilde m(t)^2}{2t}+O(s^\g)}\sfrac{dh}{ (2\pi)^{\frac{3}{2}}},
 \ee
 where $\wh C<\infty$ is a constant that does not depend on $t$ and $\k^2$. 
The denominator in the fraction appearing in the integrand 
   equals   $\sqrt 2A_2A_1t^2(1+o(1)$, for $t$ large,   because, 
  for all $s$ in the integration range $\bar I$, it holds that 
  $A_2 t>t^{\frac{1}{3}}$ and $A_1 t>t^{\frac{1}{3}}$.  
  Using this and the fact that 
  \be\Eq(schluss.1)
  \tilde m(t)^2/t=2t-\log t+O ( \log (t)^2/t),
  \ee
we see that  the expression in  \eqv(t1.no.40) is smaller than 
 \be\Eq(t1.no.41)
2 \wh C\int_0^1 
 \frac{ {t^{1-\frac{A_1}{2}}} }{A_2t\sqrt {A_1 t}}
  \eee^{-s+A_1t +O(s^\g)}\frac{dh}{(2\pi)^{\frac{3}{2}}},
 \ee
  Since  $A_1=1-A_2$, the fraction in   
  \eqv(t1.no.41) is bounded by a constant times
  \be\Eq(t1.no.45)
     \frac{t^{-1+A_2/{2}} }{A_2(1-A_2)^{\frac{1}{2}}}.
     \ee
  We now distinguish three regimes. If $A_2\in (\e,1-\e)$, for $\e>0$ independent of $t$,
  then the expression in \eqv(t1.no.45) is of order $t^{-1/2}$, as  $t\uparrow \infty$.
  If $A_2$ tends to $1$, 
  then for $s\in \bar I$, 
   \be\Eq(t1.no.45)
     \frac{t^{-1+A_2/{2}} }{A_2(1-A_2)^{\frac{1}{2}}}\leq t^{-1/2+1/3},
     \ee
which tends to zero, as $t\uparrow \infty$.
   Finally, when $A_2\downarrow 0$,  we get 
     \be\Eq(t1.no.47)
     \frac{t^{-1+A_2/{2}} }{A_2(1-A_2)^{\frac{1}{2}}}
    \leq t^{-1+2/3+o(1)},   \ee
  which tends to zero as
      $t\uparrow \infty$.
  Hence, for all $s\in \bar I$,   \eqv(t1.no.41) is bounded from above  by
 \be\Eq(t1.12)
  o(1)\int_0^1\eee^{-s+A_1t +O(s^\g)}dh.
 \ee
 Inserting this into \eqv(t1.no.3),  and writing out 
 $A_1t=h\S^2(s)+(1-h)\overline{\S}^2(s)$, we see that
 \bea\Eq(t1.no.50)
  &&\left |\E_n\left(\int_0^1(\overline{T1})dh\right)\right|
  \leq o(1) \int_{\bar I}|\Sigma^2(s)-\overline{\Sigma}^2(s)|\int_0^1\eee^{-s+(h\S^2(s)+(1-h)\overline{\S}^2(s)) +O(s^\g)} dh ds
 \nonumber\\
&&\leq 
 o(1)\int_{\bar I}\left|\eee^{-s+ \Sigma^2(s)+O(s^\g)} -
  \eee^{-s+ \overline{\Sigma}^2(s)+O(s^\g)}\right| ds,
 \eea
with  $o(1)$ tending to $0$, as  $t\uparrow \infty$, uniformly  for  $\k^2$ small enough.
This proves Lemma \thv(lem.t1).
 \end{proof}

 \begin{proof}[Proof of Lemma \thv(lem.t3)]
  We split the domain of integraion  into three  parts. First, let $\d_3>0$ be such that 
  \be\Eq(t3.1)
  \s_b^2+\sfrac{K}{2}\d_3<1 \mbox{ and }\d_3<\d_b.
  \ee
  By a Taylor expansion at zero we have
  \be\Eq(t3.2)
  \Sigma^2(s)\leq (\s_b^2+\sfrac{K}{2}\d_3)s,\quad \mbox{for }s\in[0,\d_3t].
  \ee
  Moreover, if $\d_1>0$, then so is $\d_0$, and we then choose $\d_3< 
  \d_0^<\land \d_1^<$ (with $\d^<_i\equiv \lim_{t\uparrow \infty}\d^<_i$); hence, for $t$ large enough it then also holds that 
  $\d_3< \d_0^<(t)\land \d_1^<(t)$.

 If $\d^<_1=0$, we set  (note that, by monotonicity, in this case $\d_0^<(t)\land \d_1^<(t)=\d_0^<(t)$)
  \bea\Eq(t3.3)
 (S1) &&\equiv \int_{t\d_0^<(t)}^{\d_3t }\left|\eee^{-s+ \Sigma^2(s)+O(s^\g)} - \eee^{-s+ \overline{\Sigma}^2(s)+O(s^\g)}\right|ds\nonumber\\
 && \leq \int_{t\d_0^<(t)}^{\d_3t }\left(\eee^{-s(1-\s_b^2-\frac{K}{2}\d_3)+O(s^\g)}+\eee^{-s(1-\s_b^2-\frac{K}{2}\d^<(t))+O(s^\g)}\right)ds.
  \eea
  By assumption on $\d_3$, $1-\s_b^2-\sfrac{K}{2}\d_3>0$ and $1-\s_b^2-\frac{K}{2}\d^<(t)>0$, for all t sufficiently large. Hence 
  \be\Eq(t3.4)
  \lim_{t\to\infty} (S1) =0.
  \ee
  If $\d_1^<>0$, we set $(S1)=0$. 
  
  Next we choose $\d_4$ such that
  \be\Eq(t3.5)
  \s_e^2-\d_4\sfrac{K}{2}>1\mbox{ and } \d_4<\d_e.
  \ee
  Again due to a first order Taylor expansion we have 
  \be\Eq(t3.6)
  \Sigma^2(t-\bar s)\leq t-\left(\s_e^2-\sfrac{K}{2}\d_4\right)\bar s,\quad \mbox{for }\bar s\in[t\d_1^>(t),t\d_4]
  \ee
  Hence
  \bea\Eq(t3.7)
  (S2) &&\equiv\int^{t(1-\d_1^>(t))}_{t-\d_4t }\left|\eee^{-s+ \Sigma^2(s)+O(s^\g)} - \eee^{-s+ \overline{\Sigma}^2(s)+O(s^\g)}\right|ds\nonumber\\
  &&= \int_{t\d_1^>(t)}^{\d_4t}\left|\eee^{\bar s- t+\Sigma^2(t-\bar s)+O(s^\g)}-\eee^{\bar s- t+\overline{\Sigma}^2(t-\bar s)+O(s^\g)}\right|d\bar s\nonumber\\
  &&\leq \int_{t\d_1^>(t)}^{\d_4t}\left(\eee^{\bar s(1-\s_e^2+\frac{K}{2}\d_4)+O(s^\g)}+\eee^{\bar s(1-\s_e^2+\frac{K}{2}\d^>(t))+O(s^\g)}\right)d\bar s.
  \eea
  By assumption on $\d_4$ we have $1-\k^2_e+\sfrac{K}{2}\d_4<0$ and, for $t$ large, $1-\s_e^2+\frac{K}{2}\d^>(t)<0$. Hence
  \be\Eq(t3.8)
  \lim_{t\to\infty}(S2)=0.
  \ee
  We still have to control
  \be\Eq(t3.9)
  (S3) \equiv\int^{t-\d_4t}_{\d_3t }\left|\eee^{-s+ \Sigma^2(s)+O(s^\g)} - \eee^{-s+ \overline{\Sigma}^2(s)+O(s^\g)}\right|ds.
  \ee
  Consider the function $A(x) $ on the interval $[\d_3,1-\d_4]$. Since $A(x)$ is right-continuous, increasing and $A(x)<x$ on $(0,1)$, we know that 
  \be\Eq(t3.10)
  M\equiv \inf_{x\in[\d_3,1-\d_4]}\left(x-A(x)\right) >0.
  \ee
  Then 
  \be\Eq(t3.11)
  s-\Sigma^2(s)=t(s/t-A(s/t))\geq Mt,
  \ee
  which implies
  \be\Eq(t3.12)
  \int_{\d_3t}^{t-\d_4t}\eee^{-s+\Sigma^2(s)+O(s^\g)}ds
  \leq \eee^{-Mt}\int_{\d_3t}^{t-\d_4t}\eee^{O(s^\g)}ds, 
  \ee
 which tends to zero,  as $t\uparrow \infty$. By the same argument it follows that
  \be\Eq(t3.13)
  \lim_{t\uparrow \infty} \int_{\d_3t}^{t-\d_4t}\eee^{-s+ \overline{\Sigma}^2(s)+O(s^\g)} ds =0.
  \ee
 It follows that  $\lim_{t\uparrow \infty}(S3)=0$, which concludes the proof of Lemma \thv(lem.t3). 
 \end{proof}


\begin{thebibliography}{10}

\bibitem{abbs}
E.~A\"\i{}d\'ekon, J.~Berestycki, E.~Brunet, and Z.~Shi.
\newblock Branching {B}rownian motion seen from its tip.
\newblock {\em Probab. Theor. Rel. Fields}, 157:405--451, 2013.

\bibitem{ABK_E}
L.-P. Arguin, A.~Bovier, and N.~Kistler.
\newblock The extremal process of branching {B}rownian motion.
\newblock {\em Probab. Theor. Rel. Fields}, 157:535--574, 2013.

\bibitem{athreya-ney}
K.~B. Athreya and P.~E. Ney.
\newblock {\em Branching processes}.
\newblock Springer-Verlag, New York, 1972.
\newblock Die Grundlehren der mathematischen Wissenschaften, Band 196.

\bibitem{BisLou13}
M.~{Biskup} and O.~{Louidor}.
\newblock {Extreme local extrema of two-dimensional discrete Gaussian free
  field}.
\newblock {\em ArXiv e-prints}, June 2013.

\bibitem{BoKi1}
E.~Bolthausen and N.~Kistler.
\newblock On a nonhierarchical version of the generalized random energy model.
\newblock {\em Ann. Appl. Probab.}, 16(1):1--14, 2006.

\bibitem{BoKi2}
E.~Bolthausen and N.~Kistler.
\newblock On a nonhierarchical version of the generalized random energy model.
  {II}. {U}ltrametricity.
\newblock {\em Stochastic Process. Appl.}, 119(7):2357--2386, 2009.

\bibitem{BovStatMech}
A.~Bovier.
\newblock {\em Statistical mechanics of disordered systems}.
\newblock Cambridge Series in Statistical and Probabilistic Mathematics.
  Cambridge University Press, Cambridge, 2006.

\bibitem{BoPrag13}
A.~Bovier.
\newblock From spin glasses to branching {B}rownian motion--and back?
\newblock In {\em Proceedings of the 2013 Prag Summer School on Mathematical
  Statistical Physics}, pages 1--58. Springer, to appear, 2015.

\bibitem{BovHar13}
A.~{Bovier} and L.~{Hartung}.
\newblock {The extremal process of two-speed branching Brownian motion}.
\newblock {\em Electron. J. Probab.}, 19(18):1--28, 2014.

\bibitem{BK1}
A.~Bovier and I.~Kurkova.
\newblock Derrida's generalised random energy models. {I}. {M}odels with
  finitely many hierarchies.
\newblock {\em Ann. Inst. H. Poincar\'e Probab. Statist.}, 40(4):439--480,
  2004.

\bibitem{BK2}
A.~Bovier and I.~Kurkova.
\newblock Derrida's generalized random energy models. {II}. {M}odels with
  continuous hierarchies.
\newblock {\em Ann. Inst. H. Poincar\'e Probab. Statist.}, 40(4):481--495,
  2004.

\bibitem{Bramson-rw}
M.~Bramson.
\newblock Minimal displacement of branching random walk.
\newblock {\em Probab. Theory Related Fields}, 45:89--108, 1978.

\bibitem{B_C}
M.~Bramson.
\newblock Convergence of solutions of the {K}olmogorov equation to travelling
  waves.
\newblock {\em Mem. Amer. Math. Soc.}, 44(285):iv+190, 1983.

\bibitem{B_M}
M.~D. Bramson.
\newblock Maximal displacement of branching {B}rownian motion.
\newblock {\em Comm. Pure Appl. Math.}, 31(5):531--581, 1978.

\bibitem{BraDiZei}
M.~{Bramson}, J.~{Ding}, and O.~{Zeitouni}.
\newblock {Convergence in law of the maximum of the two-dimensional discrete
  Gaussian free field}.
\newblock {\em ArXiv e-prints}, Jan. 2013.

\bibitem{chauvin88}
B.~Chauvin and A.~Rouault.
\newblock K{PP} equation and supercritical branching {B}rownian motion in the
  subcritical speed area. {A}pplication to spatial trees.
\newblock {\em Probab. Theory Related Fields}, 80(2):299--314, 1988.

\bibitem{chauvin90}
B.~Chauvin and A.~Rouault.
\newblock Supercritical branching {B}rownian motion and {K}-{P}-{P} equation in
  the critical speed-area.
\newblock {\em Math. Nachr.}, 149:41--59, 1990.

\bibitem{ChRoWa91}
B.~Chauvin, A.~Rouault, and A.~Wakolbinger.
\newblock Growing conditioned trees.
\newblock {\em Stochastic Process. Appl.}, 39(1):117--130, 1991.

\bibitem{Cox55}
D.~R. Cox.
\newblock Some statistical methods connected with series of events.
\newblock {\em J. Roy. Statist. Soc. Ser. B.}, 17:129--157; discussion,
  157--164, 1955.

\bibitem{D85}
B.~Derrida.
\newblock A generalisation of the random energy model that includes
  correlations between the energies.
\newblock {\em J. Phys. Lett.}, 46:401--407, 1985.
\bibitem{DerriSpohn88}
B.~Derrida and H.~Spohn.
\newblock Polymers on disordered trees, spin glasses, and traveling waves.
\newblock {\em J. Statist. Phys.}, 51(5-6):817--840, 1988.

\bibitem{FZ_RW}
M.~Fang and O.~Zeitouni.
\newblock Branching random walks in time inhomogeneous environments.
\newblock {\em Electron. J. Probab.}, 17:no. 67, 18, 2012.

\bibitem{FZ_BM}
M.~Fang and O.~Zeitouni.
\newblock Slowdown for time inhomogeneous branching {B}rownian motion.
\newblock {\em J. Stat. Phys.}, 149(1):1--9, 2012.

\bibitem{FerSpo}
P.~L. Ferrari and H.~Spohn.
\newblock Constrained {B}rownian motion: fluctuations away from circular and
  parabolic barriers.
\newblock {\em Ann. Probab.}, 33(4):1302--1325, 2005.

\bibitem{fisher37}
R.~Fisher.
\newblock The wave of advance of advantageous genes.
\newblock {\em Ann. Eugen.}, 7:355--369, 1937.

\bibitem{GD86a}
E.~Gardner and B.~Derrida.
\newblock Magnetic properties and function $q(x)$ of the generalised random
  energy model.
\newblock {\em J. Phys. C}, 19:5783--5798, 1986.

\bibitem{GD86b}
E.~Gardner and B.~Derrida.
\newblock Solution of the generalised random energy model.
\newblock {\em J. Phys. C}, 19:2253--2274, 1986.

\bibitem{gouere}
J.-B. {Gou{\'e}r{\'e}}.
\newblock {Branching {B}rownian motion seen from its left-most particule}.
\newblock {\em ArXiv e-prints}, May 2013.
\newblock to appear in Ast\'erisque.

\bibitem{Kallen}
O.~Kallenberg.
\newblock {\em Random measures}.
\newblock Akademie-Verlag, Berlin; Academic Press, Inc., London, fourth
  edition, 1986.
\bibitem{kpp}
A.~Kolmogorov, I.~Petrovsky, and N.~Piscounov.
\newblock Etude de l' \'equation de la diffusion avec croissance de la
  quantit\'e de mati\`ere et son application \`a un probl\`eme biologique.
\newblock {\em Moscou Universitet, Bull. Math.}, 1:1--25, 1937.

\bibitem{LS}
S.~P. Lalley and T.~Sellke.
\newblock A conditional limit theorem for the frontier of a branching
  {B}rownian motion.
\newblock {\em Ann. Probab.}, 15(3):1052--1061, 1987.



\bibitem{MZ}
P.~{Maillard} and O.~{Zeitouni}.
\newblock {Slowdown in branching {B}rownian motion with inhomogeneous
  variance}.
\newblock {\em ArXiv e-prints}, July 2013.

\bibitem{Mallein}
B.~{Mallein}.
\newblock {Maximal displacement of a branching random walk in
  time-inhomogeneous environment}.
\newblock {\em ArXiv e-prints}, July 2013.

\bibitem{McKean}
H.~P. McKean.
\newblock Application of {B}rownian motion to the equation of
  {K}olmogorov-{P}etrovskii-{P}iskunov.
\newblock {\em Comm. Pure Appl. Math.}, 28(3):323--331, 1975.

\bibitem{Moyal}
J.~Moyal.
\newblock Discontinuous {M}arkoff processes.
\newblock {\em Acta Mathematica}, 98(1-4):221--264, 1957.

\bibitem{Pan-book}
D.~Panchenko.
\newblock {\em The {S}herrington-{K}irkpatrick model}.
\newblock Springer Monographs in Mathematics. Springer, New York, 2013.

\bibitem{Res}
S.~I. Resnick.
\newblock {\em Extreme values, regular variation, and point processes},
  volume~4 of {\em Applied Probability. A Series of the Applied Probability
  Trust}.
\newblock Springer-Verlag, New York, 1987.

\bibitem{Shi}
Z.~Shi.
\newblock Random walks and trees.
\newblock Technical report, 2010.
\newblock Lecture notes.

\bibitem{Skorohod64}
A.~V. Skorohod.
\newblock Branching diffusion processes.
\newblock {\em Teor. Verojatnost. i Primenen.}, 9:492--497, 1964.

\bibitem{Tala1}
M.~Talagrand.
\newblock {\em Mean field models for spin glasses. {V}olume {I}}, volume~54 of
  {\em Ergebnisse der Mathematik und ihrer Grenzgebiete. 3. Folge.}
\newblock Springer-Verlag, Berlin, 2011.

\bibitem{Tala2}
M.~Talagrand.
\newblock {\em Mean field models for spin glasses. {V}olume {II}}, volume~55 of
  {\em Ergebnisse der Mathematik und ihrer Grenzgebiete. 3. Folge.}
\newblock Springer, Heidelberg, 2011.

\bibitem{ZeitouniLN}
O.~Zeitouni.
\newblock Branching random walks and {G}aussian free fields.
\newblock Lecture notes, 2013.

\end{thebibliography}

\end{document}